\newcommand{\qed}{\bull \medskip}
\newtheorem{theorem}{Theorem}[section]
\newtheorem{lemma}{Lemma}[section]
\newtheorem{example}{Example}[section]
\newtheorem{definition}{Definition}
\newtheorem{assum}{Assumption}[section]
\def\bt{\begin{theorem}}
\def\et{\end{theorem}\bigskip}
\def\bl{\begin{Lemma}}
\def\el{\end{Lemma}\bigskip}
\def\ep{\end{Proposition}\bigskip}
\def\bp{\begin{Proposition}}
\def\bd{\begin{definition}}
\def\ed{\end{definition}}
\definecolor{red8}{rgb}{0.8,0.0,0.8}
\definecolor{red5}{rgb}{0.5,0.0,0.5}
\definecolor{red6}{rgb}{0.6,0.0,0.6}
\definecolor{red7}{rgb}{0.7,0.0,0.7}
\definecolor{red9}{rgb}{0.9,0.0,0.9}
\definecolor{green2}{rgb}{0.0,0.2,0.0}
\definecolor{green3}{rgb}{0.0,0.3,0.0}
\definecolor{green4}{rgb}{0.0,0.4,0.0}
\definecolor{blue4}{rgb}{0.4,0.0,1.0}
\definecolor{yellow4}{rgb}{0.9,0.8,0.5}
\definecolor{blue5}{rgb}{0.0,0,0.5}
\definecolor{blue6}{rgb}{0.0,0.5,0.7}
\definecolor{hong1}{rgb}{0.8,0.8,0.8}
\def\qed{\hfill {$\Box$} \medskip}
\def\proof{\noindent\bf Proof. \hspace{4mm}\rm}
\newcommand\diff{\,{\mathrm d}}
\begin{document}

\title{\bf Stochastic absolute value equations}

\author{Shouqiang Du\footnote{Corresponding author. School of Mathematics and Statistics,
Qingdao University, Qingdao,  266071, China. E-mail: sqdu@qdu.edu.cn.}\qquad Jingjing Sun\footnote{School of Mathematics and Statistics,
Qingdao University, Qingdao,  266071, China.}\qquad Shengqun Niu\footnote{School of Mathematics and Statistics,
Qingdao University, Qingdao,  266071, China.}\qquad Liping Zhang\footnote{Department of Mathematical Sciences,  Tsinghua University, Beijing 100084, China. E-mail: lipingzhang@tsinghua.edu.cn.}}
\date{}
\maketitle

\begin{abstract} We propose a new kind of stochastic absolute value equations involving absolute values of variables. By utilizing an equivalence relation to stochastic bilinear program, we investigate the expected value formulation for the proposed stochastic absolute value equations. We also consider the expected residual minimization formulation for the proposed stochastic absolute value equations. Under mild assumptions, we give the existence conditions for the solution of the stochastic absolute value equations. The solution of the stochastic absolute value equations can be gotten by solving the discrete minimization problem. And we also propose a smoothing gradient method to solve the discrete minimization problem. Finally, the numerical results and some discussions are given.

{\bf Keywords.} Stochastic absolute value equations; expected value formulation; expected residual minimization formulation

{\bf AMS Subject Classification.} 90C30, 90C15.

\end{abstract}

\newpage

\section{Introduction}

Let $(\Omega, {{\cal F}}, \rho)$ be a probability space, where $\Omega\subseteq R^n$ and $\rho$ is a standard probability measure, we propose a new kind of stochastic absolute value equations, which is to find a vector $x\in R^n$ such that
\begin{equation}\label{e11}A(\omega)x-|x|=b(\omega),\end{equation}
where $A(\omega)\in R^{n\times n}$ and $b(\omega)\in R^n$ for $\omega\in \Omega$  are random quantities on a probability space $(\Omega, {{\cal F}}, \rho)$, $|x|$ is the componentwise absolute value of vector $x\in R^n$. We call (\ref{e11}) the stochastic absolute value equations (SAVE). When $A(\omega)$ is a deterministic matrix and $b(\omega)$ is a deterministic vector, then SAVE (\ref{e11}) reduces to the absolute value equation (AVE) which is equivalent to the general linear complementarity problem \cite{Cot12,mang07,mang06,mang0702}. The AVE  was widely used in solving linear programs, bimatrix games and fundamental problems of mathematical programming, one can see \cite{mang07,mang06,mang0702,cacc11}. In the past few decades, the stochastic variational inequality problems \cite{gur94,rav17}, the stochastic linear complementarity problems \cite{chen05,chen09,fang07,zhang09,zhang11}, the stochastic nonlinear complementarity problems \cite{lin06,linc08} and the stochastic tensor complementarity problems \cite{du20,ming20,du22} were also widely studied in solving many optimization problems with uncertainty. However, no attention has been paid to SAVE (\ref{e11}) which contains the characteristics of AVE and stochastic optimization problems.

As the AVE is an NP hard problem \cite{mang07}, it is also a hard work to solve SAVE (\ref{e11}). Generally, for the stochastic optimization problems, there are two general approaches to get the solution of the problems \cite{chen05,chen09,fang07}. The first approach applies the expected value (EV) method which formulates the problem as a deterministic problem by taking the expect of the stochastic quantity, and the second approach is the expected residual minimization (ERM) method, which is a natural extension of the least-squares method of minimizing the residual. In this paper, the equivalent relation between SAVE (\ref{e11}) and stochastic bilinear program is given. By using the EV formulation, we propose an expected value formulation for SAVE (\ref{e11}). We also study the ERM formulation for SAVE (\ref{e11}). We generate samples by the quasi-Monte Carlo methods and prove that every accumulation point of the discrete approximation problem is the solution of the expected residual minimization problem for SAVE (\ref{e11}).

 The remainder of this paper is organized as follows. In Section 2, we show that SAVE (\ref{e11}) is equivalent to a stochastic bilinear program, which is a stochastic optimization problem with the formula as a stochastic generalized linear complementarity problem. Combined  with an example, we give a discussion about the EV formulation. In Section 3, we first establish the boundedness of the solution set of the expected residual minimization problem, and then show that each accumulation point of the sequence generated by the ERM formulation is a solution of the expected residual minimization problem. In Section 4, we propose a smoothing gradient method for solving SAVE (\ref{e11}). Some numerical experiments are also given to verify the theoretical results of the ERM formulation. Finally, we complete our paper with some conclusions in Section 5.

\setcounter{proposition}{0} \setcounter{theorem}{0}
\setcounter{lemma}{0} \setcounter{corollary}{0}
\setcounter{equation}{0}
\section{Expected value formulation}
We start by showing that SAVE (\ref{e11}) is equivalent to a stochastic bilinear program. By the equivalence of the stochastic bilinear program and the stochastic generalized linear complementarity problem, SAVE (\ref{e11}) can be reformulated as a stochastic generalized linear complementarity problem. Then the expected value formulation will be used to solve SAVE (\ref{e11}).

\begin{theorem}\label{thm21} SAVE (\ref{e11}) is equivalent to the stochastic bilinear program, i.e.,
$$0=\min_{x\in R^{n}}\{((A(\omega)+I)x-b(\omega))^{T}((A(\omega)-I)x-b(\omega))~|~(A(\omega)+I)x-b(\omega)\geq0,~(A(\omega)-I)x-b(\omega)\geq0\}.$$
\end{theorem}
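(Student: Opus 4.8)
The plan is to argue pointwise in $\omega$ and to exploit the elementary difference‑of‑squares factorization that is hidden in the objective. Fix $\omega\in\Omega$ and abbreviate $A=A(\omega)$, $b=b(\omega)$. The key observation is the componentwise identity
$$\big((A+I)x-b\big)_i\big((A-I)x-b\big)_i=\big((Ax-b)_i+x_i\big)\big((Ax-b)_i-x_i\big)=(Ax-b)_i^2-x_i^2,$$
so the bilinear objective equals $\sum_{i=1}^{n}\big((Ax-b)_i^2-x_i^2\big)=\|Ax-b\|^2-\|x\|^2$. First I would record that on the feasible set $\{(A+I)x-b\ge 0,\ (A-I)x-b\ge 0\}$ each summand $((A+I)x-b)_i((A-I)x-b)_i$ is a product of two nonnegative numbers, hence the objective is nonnegative on that set. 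Consequently the assertion ``$0=\min$'' is equivalent to the existence of a feasible point at which the objective vanishes, and it then suffices to show that the feasible zeros of the objective are precisely the solutions of SAVE~(\ref{e11}).

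For the forward inclusion, suppose $x$ solves $Ax-|x|=b$, i.e.\ $Ax-b=|x|$. Then $Ax-b\ge 0$, and since $|x_i|\ge x_i$ and $|x_i|\ge -x_i$ for every $i$, we obtain $(A-I)x-b=|x|-x\ge 0$ and $(A+I)x-b=|x|+x\ge 0$, so $x$ is feasible. Moreover $(Ax-b)_i^2=|x_i|^2=x_i^2$, hence every summand above vanishes and the objective value at $x$ equals $0$; by the nonnegativity just noted, $x$ is a global minimizer with value $0$.

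For the reverse inclusion, let $x$ be feasible with objective value $0$. Being a sum of nonnegative terms that equals $0$, each term vanishes, so $(Ax-b)_i^2=x_i^2$, i.e.\ $|(Ax-b)_i|=|x_i|$ for all $i$. Feasibility gives $(Ax-b)_i\ge x_i$ and $(Ax-b)_i\ge -x_i$, hence $(Ax-b)_i\ge|x_i|\ge 0$; together with $|(Ax-b)_i|=|x_i|$ this forces $(Ax-b)_i=|x_i|$ for all $i$, that is $Ax-|x|=b$. Since $\omega$ was arbitrary, the claimed equivalence follows. I do not expect a genuine obstacle here: the computation is routine, and the only point that requires care is the interpretation of ``$0=\min$'', namely separately checking that the objective cannot drop below $0$ on the feasible region, which is exactly what turns a feasible root of the objective into an actual global minimizer.
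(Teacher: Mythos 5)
Your proof is correct and follows essentially the same route as the paper: the identity $((A+I)x-b)_i\,((A-I)x-b)_i=(Ax-b)_i^2-x_i^2$ together with the observation that feasibility encodes $(Ax-b)\ge |x|$ componentwise is exactly the mechanism behind the paper's argument. The only difference is one of completeness — the paper merely asserts the biconditional, whereas you supply the nonnegativity of the objective on the feasible set and the full reverse inclusion, both of which are worth having.
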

\begin{proof} By SAVE (\ref{e11}), from $|x|\leq A(\omega)x-b(\omega)$, we have
$$(A(\omega)+I)x-b(\omega)\geq0,~(A(\omega)-I)x-b(\omega)\geq0,$$
i.e., the above formulations are the equivalence of the constraints for the stochastic bilinear program. So we have
$$|x|=A(\omega)x-b(\omega)$$
$$\Updownarrow$$
$$((A(\omega)+I)x-b(\omega))^{T}((A(\omega)-I)x-b(\omega))=0, (A(\omega)+I)x-b(\omega)\geq0,~(A(\omega)-I)x-b(\omega)\geq0$$
We complete the proof.
\end{proof}

\begin{theorem}\label{thm22}
SAVE (\ref{e11}) is equivalent to the stochastic generalized linear complementarity problem, i.e.,
\begin{equation}\label{eqn21}
\begin{split}
(A(\omega)+I)x-b(\omega)\geq0,~(A(\omega)-I)x-b(\omega)\geq0,\\ ((A(\omega)+I)x-b(\omega))^{T}((A(\omega)-I)x-b(\omega))=0.
\end{split}
\end{equation}
\end{theorem}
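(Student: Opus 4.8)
The plan is to deduce Theorem~\ref{thm22} from Theorem~\ref{thm21} by examining when the optimal value of the stochastic bilinear program in Theorem~\ref{thm21} can equal zero. For brevity write $u = u(\omega,x) = (A(\omega)+I)x - b(\omega)$ and $v = v(\omega,x) = (A(\omega)-I)x - b(\omega)$. The first observation is that on the feasible set of that program, i.e., where $u \geq 0$ and $v \geq 0$ componentwise, the objective $u^{T}v = \sum_{i=1}^{n} u_{i} v_{i}$ is a sum of products of nonnegative reals and is therefore nonnegative. Hence the bilinear program attains the value $0$ if and only if there exists a feasible $x$ at which the objective vanishes; and for such an $x$ the three conditions $u \geq 0$, $v \geq 0$, $u^{T}v = 0$ are exactly the generalized linear complementarity system~(\ref{eqn21}).

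From here I would argue the two implications. If $x$ solves SAVE~(\ref{e11}), then by Theorem~\ref{thm21} it is a global minimizer of the bilinear program with optimal value $0$; in particular $x$ is feasible, so $u \geq 0$ and $v \geq 0$, and the optimal objective value is $u^{T}v = 0$, which is~(\ref{eqn21}). Conversely, if $x$ satisfies~(\ref{eqn21}), then $x$ is feasible for the bilinear program and attains objective value $0$; since the objective is nonnegative on the feasible set, $x$ is a global minimizer and the optimal value is $0$, so Theorem~\ref{thm21} yields that $x$ solves SAVE~(\ref{e11}).

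As an alternative, or a self-contained verification that avoids the optimization reformulation, one can argue directly. When $A(\omega)x - b(\omega) = |x|$ one has $u = |x| + x$ and $v = |x| - x$, from which $u \geq 0$, $v \geq 0$, and $u^{T}v = \sum_{i=1}^{n}(|x_{i}|+x_{i})(|x_{i}|-x_{i}) = \sum_{i=1}^{n}(x_{i}^{2}-x_{i}^{2}) = 0$, which is~(\ref{eqn21}). For the converse, from $u \geq 0$, $v \geq 0$, and $u^{T}v = 0$ one gets $u_{i}v_{i} = 0$ for each $i$, and then a short sign analysis on each coordinate of $x$ --- the cases $x_{i} > 0$, $x_{i} < 0$, $x_{i} = 0$, using $u_{i} - v_{i} = 2x_{i}$ and $u_{i} + v_{i} = 2(A(\omega)x - b(\omega))_{i}$ --- forces $(A(\omega)x - b(\omega))_{i} = |x_{i}|$ in every case, hence $A(\omega)x - b(\omega) = |x|$.

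The work here is bookkeeping rather than anything deep. The points that need care are: reading ``the minimum of the bilinear program equals $0$'' as ``the infimum is attained and equals $0$'', so that the existence of a zero-objective feasible point really is equivalent to solvability of~(\ref{eqn21}); and noting that it is precisely the componentwise nonnegativity of $u$ and $v$ that lets the single scalar equation $u^{T}v = 0$ encode full complementarity. I expect no genuine obstacle beyond these observations, given Theorem~\ref{thm21}.
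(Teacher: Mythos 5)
Your proposal is correct and its main line of argument is exactly the paper's: the paper proves Theorem~\ref{thm22} in one sentence by invoking the equivalence between the stochastic bilinear program of Theorem~\ref{thm21} and the generalized linear complementarity system, which is precisely the observation you spell out (nonnegativity of $u^{T}v$ on the feasible set makes ``minimum value $0$'' equivalent to the existence of a feasible point with vanishing objective). Your supplementary direct verification via $u=|x|+x$, $v=|x|-x$ and the coordinatewise sign analysis is sound and in fact more complete than anything written in the paper, but it is not needed to match the paper's route.
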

\begin{proof}
By the equivalence of the stochastic bilinear program and the stochastic generalized linear complementarity problem, we get this theorem.
\end{proof}

In the following of this paper, $E[\cdot]$ stands for the expectation of every elements of matrix and vector. $\bar{A}$ denotes the expectation of $A(\omega)$ and $\bar{b}$ denotes the expectation of $b(\omega)$,  i.e.,
$$\bar{A}=E[A(\omega)],~\bar{b}=E[b(\omega)].$$
Then, we get the expected value formulation of the stochastic generalized linear complementarity problem as
\begin{equation}\label{eqn22}
\begin{split}
((\bar{A}+I)x-\bar{b})^{T}((\bar{A}-I)x-\bar{b})=0,\\
(\bar{A}+I)x-\bar{b}\geq0,~(\bar{A}-I)x-\bar{b}\geq0.
\end{split}
\end{equation}
In general, the solution set of (\ref{eqn21}) is not equivalent to the solution set of (\ref{eqn22}) for all $\omega\in\Omega$. So, in this section, we consider a kind of discrete probability space, which has only finitely many elements, i.e.,
$\Omega=\{\omega_{1},\omega_{2},\cdots,\omega_{m}\}$. Now, (\ref{eqn21}) is equivalent to
$$\left\{
\begin{aligned}\label{eqn23}
G(x)\geq0,~H(x)\geq0,~G(x)^{T}H(x)=0,\\
(A(\omega_{i})+I)x-b(\omega_{i})\geq0,\\
(A(\omega_{i})-I)x-b(\omega_{i})\geq0,
\end{aligned}
\right.\eqno(2.3)$$
where $G(x)=(\bar{A}+I)x-\bar{b}$, $H(x)=(\bar{A}-I)x-\bar{b}$, $i=1,2,\cdots,m$.

In the following of this section, we reformulate (2.3) as a nonlinear equations with nonnegative constraints, i.e., the expected value formulation of SAVE (\ref{e11}). (\ref{eqn22}) is a generalized linear complementarity problem, and it can be reformulated as a semismooth equations by Fischer-Burmeister (FB) function.
FB function is an NCP function \cite{Cot12}, which is defined as
$$\phi_{FB}(a,b)=\sqrt{a^{2}+b^{2}}-a-b,$$
where $a,b\in R$. Then $x$ is a solution of (2.3) if and only if $$\tilde{H}(x,y)=0,~y\geq0,\eqno(2.4)$$
where
$$\tilde{H}(x,y)=\begin{pmatrix}
   \Phi(x) \\
   (A(\omega_{1})+I)x-b(\omega_{1})-y_{1}\\
   (A(\omega_{1})-I)x-b(\omega_{1})-y_{2}\\
   \vdots\\
   (A(\omega_{m})+I)x-b(\omega_{m})-y_{2m-1}\\
   (A(\omega_{m})-I)x-b(\omega_{m})-y_{2m}\\
\end{pmatrix}
$$
and
$$\Phi(x)=\begin{pmatrix}
   \phi_{FB}((\bar{A}+I)x-\bar{b})_{1},((\bar{A}-I)x-\bar{b})_{1} \\
   \vdots\\
   \phi_{FB}((\bar{A}+I)x-\bar{b})_{n},((\bar{A}-I)x-\bar{b})_{n}
\end{pmatrix},
$$
with
$$y=(y_{1}^{T},y_{2}^{T},\cdots,y_{2m}^{T})^{T},~y_{i}\in R^{n}, ~i=1,2,\cdots,2m.$$

Now, we give a simple example to illustrate the transformation process.
\begin{example}~Consider SAVE (\ref{e11}), where
\begin{center}
$A(\omega)=\begin{pmatrix}10+\omega&1&2&0\\1&11+\omega&3&1\\0&2&12+\omega&1\\1&7&0&13+\omega\end{pmatrix}$, $b(\omega)=\begin{pmatrix}12+\omega\\15+\omega\\14+\omega\\20+\omega\end{pmatrix}$,
\end{center}
$\Omega=\{\omega_1,\omega_2\}=\{0,2\},P_i=\mathcal{P}(\omega_i\in\Omega)=\frac{1}{2}$.
\end{example}
We know that $(1, 1, 1, 1)^T$ is the solution of this example. Now, we use the EV formulation to solve the above example. Firstly, we get
\begin{center}
$\overline{A}=\begin{pmatrix}11&1&2&0\\1&12&3&1\\0&2&13&1\\1&7&0&14\end{pmatrix}$,
$\overline{b}=\begin{pmatrix}13\\16\\15\\21\end{pmatrix}$.
\end{center}
Then by (2.4), we know that Example 2.1 can be transformed into the following constrained equations
\begin{center}
$\widetilde{H}(x,y)=\begin{pmatrix}\phi_{FB}(12x_1+x_2+2x_3-13,10x_1+x_2+2x_3-13)\\ \phi_{FB}(x_1+13x_2+3x_3+x_4-16,x_1+11x_2+3x_3+x_4-16)\\ \phi_{FB}(2x_2+14x_3+x_4-15,2x_2+12x_3+x_4-15)\\ \phi_{FB}(x_1+7x_2+15x_4-21,x_1+7x_2+13x_4-21)\\11x_1+x_2+2x_3-12-y_1\\x_1+12x_2+3x_3+x_4-15-y_2\\
2x_2+13x_3+x_4-14-y_3\\x_1+7x_2+14x_4-20-y_4\\9x_1+x_2+2x_3-12-y_5\\x_1+10x_2+3x_3+x_4-15-y_6\\2x_2+11x_3+x_4-14-y_7\\
x_1+7x_2+12x_4-20-y_8\\13x_1+x_2+2x_3-14-y_9\\x_1+13x_2+3x_3+x_4-17-y_{10}\\2x_2+15x_3+x_4-16-y_{11}\\
x_1+7x_2+16x_4-22-y_{12}\\11x_1+x_2+2x_3-14-y_{13}\\x_1+12x_2+3x_3+x_4-17-y_{14}\\2x_2+13x_3+x_4-16-y_{15}\\
x_1+7x_2+14x_4-22-y_{16}\end{pmatrix}$,
\end{center}
where $y_i\geq0,i=1,2,\cdots,16$. The optimization solution of the above constrained equations is equivalence to the optimization solution of the following constrained optimization problem
\begin{eqnarray*}
\min && \frac12\|\widetilde{H}(x,y)\|^2\\
s.t. && y\geq0.
\end{eqnarray*}
We use \textbf{fmincon} function in Matlab Optimization Toolbox to solve the transformed constrained optimization problem. The numerical results are given in the following table, where $x^0$ denotes the initial point, $x^*$ denotes the optimum solution.\\

$~~~~~~~~~~~~~~~~~~~~$\textbf{Table 2.1}~~~Numerical results for Example 2.1 \\
$$\begin{tabular}{cccccccccccccc}
\hline\noalign{\smallskip}
	 $x^0$ & $x^*$ &   $\frac12{\parallel \widetilde{H}\parallel}^2$ \\
		\noalign{\smallskip}\hline\noalign{\smallskip}
(2.5127,-2.4490,0.0596,1.9908)$^T$ & (1.000000,1.000000,1.000000,1.000000)$^T$ &2.5580$\times10^{-12}$\\
(-1.4834,3.3083,0.8526,0.4972)$^T$  & (1.000002,1.000001,1.000002,1.000001)$^T$ & 2.8157$\times10^{-9}$ \\
 (-3.3782,2.9428,-1.8878,0.2853)$^T$  & (1.000000,1.000000,1.000000,1.000000)$^T$& 1.0359$\times10^{-10}$  \\
 (-3.9335,4.6190,-4.9537,2.7491)$^T$ &(1.000000,1.000000,1.000000,1.000000)$^T$ & 1.0346$\times10^{-10}$\\
  (3.5303,1.2206,-1.4905,0.1325)$^T$  &(1.000000,1.000000,1.000000,1.000000)$^T$ &1.0320$\times10^{-10}$ \\
 \hline
\end{tabular}$$

{\bf Remark} From the numerical results of the above example, we know that the SAVE (\ref{e11}) with finite discrete distribution can be solved by constrained optimization methods. But the EV transformation is a more complicated form with nonsmooth complementarity function and only solve SAVE (\ref{e11}) with finite discrete distribution. So, in the following section, we consider the expected residual minimization formulation, which can avoid transforming the SAVE into a complicated constrained optimization problem. And the expected residual minimization formulation can also be used to solve SAVE (\ref{e11}) with any distribution involving the finite discrete distribution.

\setcounter{proposition}{0} \setcounter{theorem}{0}
\setcounter{lemma}{0} \setcounter{corollary}{0}
\setcounter{equation}{0}
\section{Expected residual minimization formulation}

To apply the expected residual minimization formulation to solve SAVE (\ref{e11}), we first formulate the problem as the following optimization problem
\begin{equation}\label{e31}\min_{x\in R^n} F(x),\end{equation}
where $F(x)=E[\|A(\omega)x-|x|-b(\omega)\|^2]=\int_{\Omega}\|A(\omega)x-|x|-b(\omega)\|^2\rho(\omega)\diff \omega$. Discrete the involved problem by the quasi-Monte Carlo method, then the solution of the original problem can be approximated obtained by solving the discrete minimization problem.

To proceed, we give the following assumption.

\begin{assum}\label{assum31} Let $\rho:\Omega\rightarrow R_{+}$ be a continuous probability density function on probability space $(\Omega, {{\cal F}}, P)$.  Suppose that
$$\int_{\Omega}(\|A(\omega)\|+1)^2\rho(\omega)\diff \omega<\infty, ~~\int_{\Omega}\|b(\omega)\|^2\rho(\omega)\diff \omega<\infty,$$
where $A(\omega)\in R^{n\times n}$, $b(\omega)\in R^n$, $\omega \in\Omega$.\end{assum}

For $\gamma >0$, we denote the level set of function $F$ by $\Xi(\gamma)$, i.e., $$\Xi(\gamma)=\{x~|~F(x)\leq\gamma\}.$$

\begin{lemma}\label{lem31} Suppose that there exists an $\bar{\omega}\in\Omega$, such that  $\rho(\bar{\omega})>0$ and $A(\omega)\neq diag(sign(x))$. Then the level set $\Xi(\gamma)$ is bounded.\end{lemma}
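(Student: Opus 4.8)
The plan is to argue by contradiction. Suppose $\Xi(\gamma)$ is unbounded, so there is a sequence $\{x^k\}\subseteq\Xi(\gamma)$ with $t_k:=\|x^k\|\to\infty$. First I would record that $F$ is finite-valued, so that $\Xi(\gamma)$ is a genuine sublevel set: by the triangle inequality,
$$\|A(\omega)x-|x|-b(\omega)\|^2\le 2(\|A(\omega)\|+1)^2\|x\|^2+2\|b(\omega)\|^2,$$
and the right-hand side is $\rho$-integrable for every fixed $x$ by Assumption \ref{assum31}.

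Next comes the orthant reduction. Since $R^n$ is covered by the finitely many closed orthants, along a subsequence all $x^k$ lie in one of them; equivalently, there is a diagonal matrix $S=\mathrm{diag}(s_1,\dots,s_n)$ with each $s_i\in\{-1,1\}$ such that $|x^k|=Sx^k$ for all $k$ in the subsequence. Passing to a further subsequence, $d^k:=x^k/t_k\to d$ with $\|d\|=1$; moreover $s_i d_i\ge 0$ for every $i$, so $\mathrm{sign}(d_i)=s_i$ whenever $d_i\neq 0$, and hence $Sd=|d|$. For these $k$ we may write $A(\omega)x^k-|x^k|-b(\omega)=(A(\omega)-S)x^k-b(\omega)$, so $F(x^k)\le\gamma$ becomes, after dividing by $t_k^2$,
$$\int_\Omega \Big\|(A(\omega)-S)d^k-\frac{b(\omega)}{t_k}\Big\|^2\rho(\omega)\diff\omega\le\frac{\gamma}{t_k^2}.$$

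Now I would let $k\to\infty$. For each fixed $\omega$ the integrand converges to $\|(A(\omega)-S)d\|^2$, since $d^k\to d$ and $b(\omega)/t_k\to 0$; as the integrands are nonnegative, Fatou's lemma gives $\int_\Omega\|(A(\omega)-S)d\|^2\rho(\omega)\diff\omega\le\liminf_k \gamma/t_k^2=0$, whence $(A(\omega)-S)d=0$ for $\rho$-almost every $\omega$. Because $\rho$ is continuous and $\rho(\bar\omega)>0$, $\rho$ is positive on a neighbourhood of $\bar\omega$, so $\{\omega:(A(\omega)-S)d=0\}$ has full measure near $\bar\omega$; using the continuity of $A(\cdot)$ at $\bar\omega$ this forces $A(\bar\omega)d=Sd=|d|$ with $d\neq 0$. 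This contradicts the hypothesis that $A(\bar\omega)\neq\mathrm{diag}(\mathrm{sign}(x))$, i.e. that $A(\bar\omega)x=|x|$ admits no nonzero solution. Hence $\Xi(\gamma)$ is bounded.

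The integrability bound and the Fatou step are routine; the points that need care are the subsequence bookkeeping (fixing the sign pattern $S$, and checking $Sd=|d|$ for the limiting direction) and the transfer of the $\omega$-a.e.\ conclusion back to the single sample $\bar\omega$ named in the hypothesis, which is exactly where the continuity built into Assumption \ref{assum31} enters. The main obstacle I anticipate is pinning down the precise meaning of the condition ``$A(\bar\omega)\neq\mathrm{diag}(\mathrm{sign}(x))$'' so that the relation $A(\bar\omega)d=|d|$ with $d\neq 0$ is genuinely ruled out (equivalently, that $A(\bar\omega)-D$ is nonsingular for every diagonal $D$ with entries in $\{-1,1\}$).
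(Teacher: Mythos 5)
Your proof is correct, and it takes a genuinely different route from the paper's. The paper argues directly: it fixes a ball $B=D(\bar{\omega},\delta)\cap\Omega$ on which $\rho\geq\rho_0>0$, bounds $F(x_k)\geq\lambda\rho_0\|\phi(x_k,\omega_k)\|^2$ where $\omega_k$ minimizes $\|\phi(x_k,\cdot)\|$ over $B$, and then simply asserts that some component of $(A(\omega_k)-\mathrm{diag}(\mathrm{sign}(x_k)))x_k-b(\omega_k)$ diverges because some $x_k^i\to\pm\infty$. That last assertion is exactly the point that needs an argument (cancellation in the $i$-th row is not excluded), and the paper supplies none. Your recession-direction argument --- fixing the sign pattern $S$ on a subsequence, normalizing $d^k=x^k/\|x^k\|\to d$ with $Sd=|d|$, dividing the level-set inequality by $\|x^k\|^2$, and applying Fatou to conclude $(A(\omega)-S)d=0$ $\rho$-a.e.\ and hence at $\bar{\omega}$ by continuity --- is the standard technique from the stochastic LCP literature and actually closes the gap the paper leaves open: it makes explicit that boundedness fails precisely when $A(\bar{\omega})d=|d|$ for some $d\neq 0$. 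The caveat you flag at the end is real and is shared by both proofs: the hypothesis ``$A(\omega)\neq\mathrm{diag}(\mathrm{sign}(x))$'' read literally (the matrices are merely unequal) does not rule out $A(\bar{\omega})d=|d|$ with $d\neq0$, so the lemma is only true under the stronger reading you adopt, namely that $A(\bar{\omega})x=|x|$ has no nonzero solution (for your argument it suffices that $(A(\bar{\omega})-S)d\neq0$ whenever $d\neq0$ and $Sd=|d|$; full nonsingularity of $A(\bar{\omega})-D$ for every sign-diagonal $D$ is slightly stronger than needed). You also correctly note that continuity of $A(\cdot)$ is needed to pass from the a.e.\ statement to $\bar{\omega}$; the paper uses this implicitly as well.
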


\proof By $\rho$ is continuous, there exists  $\rho_0>0$ such that $$\rho(\omega)\geq \rho_0,~~\texttt{for all}~ \omega\in B=D(\bar{\omega},\delta)\cap\Omega,$$ where
$D(\bar{\omega},\delta)$ is a closed sphere with center $\bar{\omega}$ and radius $\delta$. We now consider a sequence $\{x_k\}\in R^n$. By the continuity of $\phi$, then there exists $\omega_k\in B$, such that
$$\|\phi(x_k,\omega_k)\|=\min_{\omega\in B}\|\phi(x_k,\omega)\|,$$
where  $\phi(x_k,\omega)=A(\omega)x_k-|x_k|-b(\omega)$.

Denote $\lambda=\int_{B}d\omega>0$. Then
$$\begin{array}{rl}F(x_k)\geq&\int_{B}\|\phi(x_k,\omega)\|^2\rho(\omega)\diff \omega\\
\geq&\|\phi(x_k,\omega_k)\|^2\bar{\rho}\int_{B}\diff \omega\\
=&\lambda\bar{\rho}\|\phi(x_k,\omega_k)\|^2.\end{array}$$
Now, we only need to prove $\|\phi(x_k,\omega_k)\|\rightarrow+\infty$ as $\|x_k\|\rightarrow+\infty$. Suppose $\|x_k\|\rightarrow+\infty$ holds, we know that $x_k^i\rightarrow +\infty$ or $x_k^i\rightarrow -\infty$ for some $i$. So, we get
$$((A(\omega_k)-diag(sign(x_k)))x_k-b(\omega_k))_i \rightarrow +\infty~~ \texttt{or}~~ ((A(\omega_k)-diag(sign(x_k)))x_k-b(\omega_k))_i \rightarrow -\infty$$
for some $i$, i.e., we get $\|\phi(x_k,\omega_k)\|\rightarrow+\infty$ holds for $\|x_k\|\rightarrow+\infty$. Hence, the proof is completed.\qed

In the following of this section, the quasi-Monte Carlo method for numerical integration is used as in \cite{chen05,nie92}. The transformation function $\omega=u(\nu)$ is used to go from an integral on $\Omega$ to the integral on the unit hypercube $[0,1]^m$. And the observations $\{\nu_i\}, i=1,\cdots,\bar{N}$ are generated in this unit hypercube.\\

Then, we get
$$\begin{array}{rl}F(x)=&\int_{\Omega}\|\phi(x,\omega)\|^2\rho(\omega)\diff \omega\\
=&\int_{\bar{\Omega}}\|\phi(x,u(\nu))\|^2\rho(u(\nu))u'(\nu)\diff \nu\\
=&\int_{\bar{\Omega}}\|\phi(x,u(\nu))\|^2\bar{\rho}(\nu)\diff \nu,\end{array}$$
where $\bar{\rho}(\nu)=\rho(u(\nu))u'(\nu),~\bar{\Omega}=[0,1]^m$.

For each $k$, we denote
$$F_k(x)=\frac{1}{\bar{N_k}}\sum_{\nu_i\in\bar{\Omega}_k}\|\phi(x,\nu_i)\|^2\rho(\nu_i),$$
where $\phi(x,\nu_i)=A(\nu_i)x-|x|-b(\nu_i)$, $\bar{\Omega}_k:=\{\nu_i,i=1,\cdots,\bar{N_k}\}$ is a set of observations generated by a quasi-Monte Carlo method such that $\bar{\Omega}_k\subseteq\Omega,~\bar{N_k}\rightarrow\infty$ as $k\rightarrow\infty$. In the remainder of this section, to simplify the natation, we suppose $\Omega=[0,1]^m$ and let $\omega$ denote $\nu$.

Now, we consider
\begin{equation}\label{e32}\min_{x\in R^n}F_k(x).\end{equation}
Obviously, (\ref{e32}) is the approximation problem to  (\ref{e31}).

\begin{lemma}\label{lem32}~For any fixed $x\in R^n$, we get
$$\lim_{k\rightarrow\infty}F_k(x)=F(x).$$\end{lemma}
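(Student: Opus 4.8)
The plan is to read the identity as a convergence statement for quasi--Monte Carlo quadrature applied to the \emph{fixed} integrand $g(\omega):=\|\phi(x,\omega)\|^2\rho(\omega)$ on $\Omega=[0,1]^m$, where $\phi(x,\omega)=A(\omega)x-|x|-b(\omega)$. First I would check that $F(x)$ is well defined and finite, so that the target of the limit makes sense. Using $\||x|\|=\|x\|$ and the triangle inequality,
$$\|\phi(x,\omega)\|=\|A(\omega)x-|x|-b(\omega)\|\leq(\|A(\omega)\|+1)\|x\|+\|b(\omega)\|,$$
hence $\|\phi(x,\omega)\|^2\leq 2(\|A(\omega)\|+1)^2\|x\|^2+2\|b(\omega)\|^2$; integrating this bound against $\rho$ and invoking Assumption \ref{assum31} gives $F(x)=\int_\Omega g(\omega)\diff\omega<\infty$. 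Since $\rho$ is continuous on the compact cube and $A(\cdot),b(\cdot)$ are continuous there (hence $\phi(x,\cdot)$ is), $g$ is bounded and Riemann integrable on $[0,1]^m$.

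Next I would invoke the defining property of the quasi--Monte Carlo point sets $\bar{\Omega}_k=\{\nu_i:i=1,\dots,\bar{N_k}\}$ used in \cite{chen05,nie92}: they are constructed so that their discrepancy tends to $0$ as $k\to\infty$, equivalently the sequence $\{\nu_i\}$ is uniformly distributed in $[0,1]^m$. For a bounded Riemann--integrable function on the unit cube this is exactly the condition under which the sample average converges to the integral, so
$$\lim_{k\to\infty}\frac{1}{\bar{N_k}}\sum_{\nu_i\in\bar{\Omega}_k}g(\nu_i)=\int_{[0,1]^m}g(\omega)\diff\omega .$$
Recalling that, under the normalization $\Omega=[0,1]^m$ adopted just before (\ref{e32}), $F_k(x)=\frac{1}{\bar{N_k}}\sum_{\nu_i\in\bar{\Omega}_k}g(\nu_i)$ and $F(x)=\int_\Omega g(\omega)\diff\omega$, this is precisely $\lim_{k\to\infty}F_k(x)=F(x)$. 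A quantitative alternative is the Koksma--Hlawka inequality, which bounds the error by the variation of $g$ times the discrepancy of $\bar{\Omega}_k$, the latter vanishing by construction; this needs $g$ of bounded variation in the sense of Hardy--Krause, which again follows from smoothness of $A(\cdot),b(\cdot),\rho$.

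The substantive point --- and the only real obstacle --- is regularity bookkeeping rather than anything deep: one must make sure that the assumed properties of $A(\omega)$ and $b(\omega)$ (continuity, or bounded variation, on $[0,1]^m$) place $g$ in the function class for which the chosen quasi--Monte Carlo scheme is guaranteed to converge, and that the weight $\rho(\nu_i)$ appearing in the definition of $F_k$ is the one produced by the change of variables $\bar{\rho}(\nu)=\rho(u(\nu))u'(\nu)$ that was folded into the normalization. Once the integrand has been pinned down as a fixed bounded Riemann--integrable function, the conclusion is immediate from standard quasi--Monte Carlo convergence theory.
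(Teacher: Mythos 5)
Your argument is correct and follows essentially the same route as the paper: bound $\|\phi(x,\omega)\|$ by $(\|A(\omega)\|+1)\|x\|+\|b(\omega)\|$, use Assumption \ref{assum31} to conclude that the integrand $\|\phi(x,\cdot)\|^2\rho(\cdot)$ is integrable with $F(x)<\infty$, and then appeal to its continuity to pass from the quasi--Monte Carlo sample averages to the integral. The only difference is that you make explicit the uniform-distribution/low-discrepancy justification for the final convergence step, which the paper simply asserts from continuity of the integrand.
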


\proof From the definition of $\phi$,  we get
$$\begin{array}{rl}\|\phi(x,\omega)\|=&\|A(\omega)x-|x|-b(\omega)\|\\
\leq&\|A(\omega)x\|+\|x\|+\|b(\omega)\|\\
\leq&(\|A(\omega)\|+1)\|x\|+\|b(\omega)\|,\end{array}$$
i.e.,
$$0\leq \|\phi(x,\omega)\|^2\leq 2[((\|A(\omega)\|+1)\|x\|)^2+\|b(\omega)\|^2].$$
By Assumption \ref{assum31}, we know that $(\|A(\omega)\|+1)^2\rho(\omega)$ is a nonnegative continuous function and it is also bounded.
Therefore, we get $\|\phi(x,\cdot)\|^2\rho(\cdot)$ is integrable and $0\leq F(x)<\infty$. By $\|\phi(x,\cdot)\|^2\rho(\cdot)$ is continuous, we have
$$\lim_{k\rightarrow\infty}F_k(x)=F(x),$$
for $x\in R^n.$ This completes the proof.\qed

Denote $\vartheta$ as the optimal solution set of (\ref{e31}), and $\vartheta_k$ as the optimal solution set of (\ref{e32}). Now, we give the following theorem to show the relation of the expected residual minimization problem (\ref{e31}) and the approximate expected residual minimization problem (\ref{e32}).

\begin{theorem}\label{Thm31}~If $\rho(\bar{\omega})>0$ holds for $\bar{\omega}\in \Omega$, then $\vartheta_k$ is nonempty and bounded when $k$ is large enough. And every accumulation point of $\{x_k\}\subseteq\vartheta_k$ is contained in the set $\vartheta$.\end{theorem}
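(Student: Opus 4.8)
The plan is to run the standard sample-average-approximation consistency argument in three steps: (i) show that each $\vartheta_k$ is nonempty and that the family $\{\vartheta_k\}$ is uniformly bounded once $k$ is large; (ii) upgrade the pointwise limit of Lemma \ref{lem32} to uniform convergence of $F_k$ to $F$ on compact sets; and (iii) combine (i), (ii) and the optimality of $x_k$ for (\ref{e32}) to conclude that every accumulation point of $\{x_k\}\subseteq\vartheta_k$ minimizes $F$. Throughout I would also record that Assumption \ref{assum31} together with the bound $\|\phi(x,\omega)\|^2\le 2[((\|A(\omega)\|+1)\|x\|)^2+\|b(\omega)\|^2]$ from the proof of Lemma \ref{lem32} and dominated convergence make $F$ finite and continuous on $R^n$.

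For step (i), first note that each $F_k$ is a finite sum of continuous functions, hence continuous, so it is enough to show that the level sets $\Xi_k(\gamma):=\{x~|~F_k(x)\le\gamma\}$ lie in one fixed ball for all large $k$; Weierstrass then gives $\vartheta_k\ne\emptyset$ and the uniform bound. This mirrors Lemma \ref{lem31}. Fixing the ball $B=D(\bar\omega,\delta)\cap\Omega$ on which $\rho\ge\rho_0>0$, asymptotic equidistribution of the quasi-Monte Carlo points $\{\nu_i\}$ guarantees that the fraction of samples in $B$ is bounded below by some $\lambda_0>0$ for $k$ large, so $F_k(x)\ge\rho_0\lambda_0\min_{\nu_i\in\bar\Omega_k\cap B}\|\phi(x,\nu_i)\|^2$. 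Writing $\phi(x,\omega)=(A(\omega)-diag(sign(x)))x-b(\omega)$ and arguing orthant by orthant, where $sign(x)$ is constant and $b(\omega)$ is bounded over the compact set $\Omega$, the coercivity estimate of Lemma \ref{lem31} gives $\min_{\omega\in B}\|\phi(x,\omega)\|\to+\infty$ as $\|x\|\to+\infty$ with a bound independent of $k$, whence $\Xi_k(\gamma)$ sits in a fixed ball for all large $k$.

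For step (ii), set $g(x,\omega)=\|\phi(x,\omega)\|^2\rho(\omega)$. On $K\times\Omega$ with $K\subset R^n$ compact and $\Omega=[0,1]^m$, $g$ is continuous, so $\omega\mapsto g(x,\omega)$ has total variation bounded uniformly in $x\in K$; by the Koksma--Hlawka inequality, $\sup_{x\in K}|F_k(x)-F(x)|\le V_K\, D^*(\bar\Omega_k)$, where $V_K$ is that uniform variation bound and $D^*(\bar\Omega_k)\to 0$ is the star-discrepancy of the sample set. Hence $F_k\to F$ uniformly on every compact $K$.

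For step (iii), let $x^*$ be an accumulation point of $\{x_k\}\subseteq\vartheta_k$, say $x_{k_j}\to x^*$. By step (i) all $x_{k_j}$ lie in a fixed compact $K$, so step (ii) yields $|F_{k_j}(x_{k_j})-F(x_{k_j})|\to 0$, and continuity of $F$ gives $F(x_{k_j})\to F(x^*)$, whence $F_{k_j}(x_{k_j})\to F(x^*)$. For arbitrary $x\in R^n$, optimality of $x_{k_j}$ gives $F_{k_j}(x_{k_j})\le F_{k_j}(x)$, and $F_{k_j}(x)\to F(x)$ by Lemma \ref{lem32}; letting $j\to\infty$ gives $F(x^*)\le F(x)$ for every $x$, i.e. $x^*\in\vartheta$. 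The main obstacle I anticipate is step (i): one must convert equidistribution of the quasi-Monte Carlo points into a uniform positive lower bound on the sample mass inside $B$, and then obtain a coercivity bound for $\|\phi(x,\omega)\|$ that is uniform over $\omega$ in $\Omega$, which is delicate because $diag(sign(x))$ is only piecewise constant and forces the orthantwise case analysis above.
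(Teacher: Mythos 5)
Your step (iii) is essentially the paper's own argument: pass to a convergent subsequence $x_k\to\bar x$, show $F_k(x_k)\to F(\bar x)$, and combine the optimality inequality $F_k(x_k)\le F_k(x)$ with Lemma \ref{lem32} to get $F(\bar x)\le F(x)$ for all $x$. Where you genuinely diverge is in the two supporting steps. For the convergence ingredient the paper does not use Koksma--Hlawka; it proves an equi-Lipschitz estimate $|F_k(x)-F_k(y)|\le \digamma\|x-y\|$ on the level set $\bar D(\gamma)$, with $\digamma$ controlled by the integrals in Assumption \ref{assum31}, and then only needs the pointwise limit $F_k(\bar x)\to F(\bar x)$ at the single point $\bar x$. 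This matters because your step (ii) has a gap as written: continuity of $\omega\mapsto\|\phi(x,\omega)\|^2\rho(\omega)$ on $[0,1]^m$ does \emph{not} imply finite Hardy--Krause variation, so the bound $\sup_{x\in K}|F_k(x)-F(x)|\le V_K\,D^*(\bar\Omega_k)$ requires smoothness of $A(\cdot)$, $b(\cdot)$, $\rho(\cdot)$ beyond what the paper assumes; the equi-Lipschitz-plus-pointwise route buys the same conclusion from the integrability hypotheses alone. Conversely, your step (i) is a genuine improvement: the paper never actually proves that $\vartheta_k$ is nonempty and bounded --- its proof simply begins with ``assume $x_k\to\bar x$'' --- whereas your derivation of uniform coercivity of $F_k$ from the equidistribution of the quasi-Monte Carlo points in $B$ together with the coercivity in Lemma \ref{lem31} is exactly what the first half of the statement needs. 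Both you and the paper inherit the weakness of Lemma \ref{lem31} itself, whose coercivity claim really requires $A(\omega)-diag(sign(x))$ to be nonsingular on the relevant orthants rather than merely $A(\omega)\neq diag(sign(x))$; your orthant-by-orthant remark correctly flags where that issue lives.
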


\proof We assume that $x_k\rightarrow \bar{x}, k\rightarrow\infty$. Let $F(\overline{x})<\gamma$, by the continuity of $F$, we know that $F(x_k)\leq\gamma$
for all large $k$, i.e., $x_k\in \bar{D}(\gamma)$ for all large $k$. Now, we  show that
$$|F_k(x_k)-F_k(\overline{x})|\rightarrow0,~{\rm when ~~}k\rightarrow\infty.$$
For all $x,y\in R^n$, we get
$$\begin{array}{rl}\|\phi(x,\omega)-\phi(y,\omega)\|=&\|A(\omega)x-|x|-b(\omega)-A(\omega)y+|y|+b(\omega)\|\\
=&\|A(\omega)x-A(\omega)y+|y|-|x|\|\\
\leq&\|A(\omega)\|\|x-y\|+\|x-y\|\\
=&(\|A(\omega)\|+1)\|x-y\|.\end{array}$$
Denote $L(\omega)=\|A(\omega)\|+1$, we also get
$$\begin{array}{rl}\|\phi(x,\omega)\|\leq&(\|A(\omega)\|+1)\|x\|+\|b(\omega)\|\\
=&(\|A(\omega)\|+1)\|x\|+\|b(\omega)\|.\end{array}$$
By Lemma \ref{lem31}, $\bar{D}(\gamma)$ is closed and bounded, we have
$$\begin{array}{rl}|\|\phi(x,\omega)\|^2-\|\phi(y,\omega)\|^2|=&|(\|\phi(x,\omega)\|+\|\phi(y,\omega)\|)(\|\phi(x,\omega)\|-\|\phi(y,\omega)\|)|\\
\leq&((\|A(\omega)\|+1)\|x\|+(\|A(\omega)\|+1)\|y\|+2\|b(\omega)\|)\\
&(\|A(\omega)\|+1)\|x-y\|\\
=&(L(\omega)\|x\|+L(\omega)\|y\|+2\|b(\omega)\|)L(\omega)\|x-y\|\\
\leq&(2L(\omega)C_1+2\|b(\omega)\|)L(\omega)\|x-y\|,\end{array}$$
where $C_1=\max\{\|x\||x\in \bar{D}(\gamma)\},$ $x,y\in \bar{D}(\gamma)$.
By
$$\begin{array}{rl}(2L(\omega)C_1+2\|b(\omega)\|)L(\omega)\leq&(L(\omega)C_1+\|b(\omega)\|)^2+[L(\omega)]^2\\
=&[L(\omega)C_1]^2+\|b(\omega)\|^2+[L(\omega)]^2+2C_1L(\omega)\|b(\omega)\|\\
\leq&[L(\omega)C_1]^2+\|b(\omega)\|^2+[L(\omega)]^2+[C_1L(\omega)]^2+\|b(\omega)\|^2\\
=&(2C_1^2+1)[L(\omega)]^2+2\|b(\omega)\|^2,\end{array}$$
we obtain
$$\begin{array}{rl}|F_k(x_k)-F_k(\overline{x})|\leq&\frac{1}{\bar{N_k}}\sum\limits_{i=1}^{\bar{N_k}}|\|\phi(x_k,\omega_i)\|^2-\|\phi(\overline{x},\omega_i)\|^2|\rho(\omega_i)\\
\leq&\frac{1}{\bar{N_k}}\sum\limits_{i=1}^{\bar{N_k}}((2C_1^2+1)[L(\omega_i)]^2+2\|b(\omega_i)\|^2)\rho(\omega_i)\|x_k-\overline{x}\|\\
\leq& \digamma\|x_k-\overline{x}\|,\end{array}$$
where $\digamma$ is a constant and for all large $k$ satisfying
$$\digamma\geq \frac{1}{\bar{N_k}}\sum\limits_{i=1}^{\bar{N_k}}(((2C_1^2+1)[L(\omega_i)]^2+2\|b(\omega_i)\|^2)\rho(\omega_i)).$$
So, for $k\rightarrow \infty$,  we get
$$|F_k(x_k)-F_k(\overline{x})|\rightarrow0.$$
From the above results and Lemma \ref{lem32}, we obtain
$$|F_k(x_k)-F(\overline{x})|\leq|F_k(x_k)-F_k(\overline{x})|+|F_k(\overline{x})-F(\overline{x})|\rightarrow0,$$
when $k\rightarrow\infty$. By $x_k\in \vartheta_k$, we get
$$F_k(x_k)\leq F_k(x),~for~x\in R^n.$$
Therefore, we have
$$F(\overline{x})=\lim_{k\rightarrow\infty}F_k(x_k)\leq\lim_{k\rightarrow\infty}F_k(x)=F(x),~x\in R^n.$$
We complete the proof.\qed

Now, we give two special kinds of SAVE (\ref{e11}), which can be solved without using discrete approximation.\\

{\bf Case I.} Let $\Omega=[\tilde{\alpha}_1,\tilde{\beta}_1]\times\cdots\times[\tilde{\alpha}_N,\tilde{\beta}_N]$ with $\tilde{\alpha}_j<\tilde{\beta}_j,~j=1,\cdots,N$, and $\tilde{\omega}_j,j=1,\cdots,N$ are independent. When $\rho$ satisfies Assumption \ref{assum31} and $\rho_j$ denotes the density function for $\tilde{\omega}_j$, $j=1,\cdots,N$. We know that
$$F(x)=\sum\limits_{i=1}^nF_i(x),$$
where $$\begin{array}{l}
F_i(x)=\int_{\tilde{\alpha}_1}^{\tilde{\beta}_1}\cdots\int_{\tilde{\alpha}_N}^{\tilde{\beta}_N}[(A(\tilde{\omega})x-|x|-b(\tilde{\omega}))_i]^2\rho_1(\tilde{\omega}_1)\cdots\rho_N(\tilde{\omega}_N)\diff \tilde{\omega}_1\cdots \diff \tilde{\omega}_N.\end{array}$$

{\bf Case II.} Let $A(\omega)\equiv A$ and $b(\omega)\equiv\tilde{b}+T\omega$, where $A\in R^{n\times n}$, $\tilde{b}\in R^n$ and $T\in R^{n\times m}$ are given constants. For each $i$, the $i$th row of the matrix $T$ has just one positive element $t_i$, and the density function $\rho$ is defined by
$$\rho(\omega)=\left\{
\begin{array}{rcl}
1,~~\omega\in[0,1]^m\\
0,~~otherwise.\\
\end{array}\right.$$
In this case, we get $F(x)=\sum\limits_{i=1}^nF_i(x)$, where $$\begin{array}{rl}F_i(x)=&\int_0^1\cdots\int_0^1[(Ax-|x|-b(\omega))_i]^2\rho_1(\omega_1)\rho_2(\omega_2)\cdots\rho_m(\omega_m)\diff \omega_1\diff \omega_2\cdots \diff \omega_m\\
=&\int_0^1\cdots\int_0^1\int_0^1[(Ax-|x|-\tilde{b}-t_i\omega_j)_i]^2\rho_j(\omega_j)\diff \omega_j\rho_1(\omega_1)\cdots\rho_{j-1}(\omega_{j-1})\\
&\rho_{j+1}(\omega_{j+1})\cdots\rho_m(\omega_m)\diff \omega_1\cdots \diff \omega_{j-1}\diff \omega_{j+1}\cdots \diff \omega_m\\
=&[Ax-|x|-\tilde{b}]_i^2+\frac{1}{3}t_i^2-[Ax-|x|-\tilde{b}]_it_i.\end{array}$$

\setcounter{proposition}{0} \setcounter{theorem}{0}
\setcounter{lemma}{0} \setcounter{corollary}{0}
\setcounter{equation}{0}
\section{A smoothing gradient method}
In this section, we use the ERM formulation to transform SAVE (\ref{e11}) into an unconstrained optimization problem. For SAVE (\ref{e11}) contains nonsmooth term $|x|$, we consider smoothing method to solve it. Smoothing gradient method is an effective smoothing method to deal with this kind of problems \cite{BLO05,K07,C12}, so we use the smoothing gradient method to solve SAVE (\ref{e11}).

Firstly, we generate samples $\omega^i, i=1,2,\cdots,N$, i.e.,
\begin{equation*}
f(x)=\frac{1}{N}\sum_{i=1}^N\|A(\omega_i)x-|x|-b(\omega_i)\|^2\rho(\omega_i),
\end{equation*}
and we choose the smoothing function of $|x_i|$ as
\begin{equation*}
\psi(x_i,\mu)=\sqrt{x_i^2+\mu}, i=1,2,\cdots,n,
\end{equation*}
where $\mu\geq 0$. Denote $\psi(x,\mu)=(\sqrt{x_1^2+\mu}, \sqrt{x_2^2+\mu}, \cdots, \sqrt{x_n^2+\mu})^{T}$.
Then SAVE (\ref{e11}) can be transformed into the following unconstrained optimization problem
\begin{equation}\label{ghmin}
\underset {x\in R^n}{\min}\;\widetilde{f}(x,\mu)=\frac{1}{N}\sum_{i=1}^N\|A(\omega_i)x-\psi(x,\mu)-b(\omega_i)\|^2\rho(\omega_i).
\end{equation}
And the gradient of the objective function in \eqref{ghmin} is
\begin{equation*}
\nabla_{x}{\widetilde{f}(x,\mu)}=\frac{2}{N}\sum_{i=1}^NJ(A(\omega_i)x-\psi(x,\mu)-b(\omega_i))^{T}(A(\omega_i)x-\psi(x,\mu)-b(\omega_i))\rho(\omega_i),
\end{equation*}
where $J(A(\omega_i)x-\psi(x,\mu)-b(\omega_i))$ is the Jacobian of $(A(\omega_i)x-\psi(x,\mu)-b(\omega_i))$.

Next, we give the smoothing gradient method for SAVE (\ref{e11}).
\begin{algorithm}[H]\label{PH}
\caption{Smoothing gradient method}
\begin{algorithmic}[1]
\item[{\bf Step 0}]Given an initial point $x_0\in R^n$, $\mu_0\in R$, $\sigma,\delta,\rho,\bar{\gamma}\in(0,1)$, $\epsilon>0$, set $k=0$.

\item[{\bf Step 1}]If $\|\nabla_{x}{\widetilde{f}(x_k,\mu_k)}\|\leq\varepsilon$, stop. Otherwise, go to Step 2.

\item[{\bf Step 2}]Computing the search direction $d_k=-\nabla{\widetilde{f}(x_k,\mu_k)}$.
\item[{\bf Step 3}] Determine $\alpha_k=\underset j{\max}\{\rho^j, j=0,1,2,\cdots\}$ satisfying
\begin{equation}\label{armijo}
\widetilde{f}(x_{k+1},\mu_k)-\widetilde{f}(x_k,\mu_k)\leq\delta\alpha_k\nabla_{x}{\widetilde{f}(x_k,\mu_k)}^{T}d_k.
\end{equation}
Set $x_{k+1}=x_k+\alpha_k d_k$.

\item[{\bf Step 4}]If $\|\nabla{\widetilde{f}(x_k,\mu_k)}\|\geq\bar{\gamma}\mu_k$, then set $\mu_{k+1}=\mu_k$; Otherwise choose $\mu_{k+1}=\sigma \mu_k$.

\item[{\bf Step 5}]Let $k=k+1$ and return to Step 1.

\end{algorithmic}
\end{algorithm}

It is easy to find that $\widetilde{f}(\cdot,\mu)\geq 0$ for any constant $\mu\geq0$ and $\forall x\in R^n$, $\nabla_{x}{\widetilde{f}(\cdot,\mu)}$ is uniformly continuous on the level set $L(x_0,\mu)=\{x\in R^n|\widetilde{f}(x,\mu)\leq\widetilde{f}(x_0,\mu)\}$. Next, we give the global convergence of the proposed smoothing gradient method.
\begin{lemma}\label{lemma4.1}
Given a constant $\overline{\mu}\geq 0$. Let $\{x_k\}$ be the sequence generated by Algorithm 1, then
\begin{equation*}
\underset{k\rightarrow\infty}{\lim}\|\nabla_{x}{\widetilde{f}(x_{k},\overline{\mu})}\|=0.
\end{equation*}
\end{lemma}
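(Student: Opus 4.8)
The plan is to treat Lemma~\ref{lemma4.1} as the textbook global--convergence statement for steepest descent with an Armijo line search, applied with the smoothing parameter frozen at $\overline{\mu}$: for $\overline{\mu}>0$ the map $x\mapsto\widetilde{f}(x,\overline{\mu})$ is a fixed, nonnegative, continuously differentiable function, since each $\psi(x,\overline{\mu})_i=\sqrt{x_i^2+\overline{\mu}}$ is smooth when $\overline{\mu}>0$. First I would record three facts. (i) With $d_k=-\nabla_x\widetilde{f}(x_k,\overline{\mu})$ one has $\nabla_x\widetilde{f}(x_k,\overline{\mu})^Td_k=-\|\nabla_x\widetilde{f}(x_k,\overline{\mu})\|^2\le0$, so $d_k$ is a descent direction and the backtracking in Step~3 terminates with some $\alpha_k>0$ whenever $\nabla_x\widetilde{f}(x_k,\overline{\mu})\neq0$ (if it equals $0$ there is nothing to prove). (ii) From the Armijo rule~\eqref{armijo}, $\widetilde{f}(x_{k+1},\overline{\mu})\le\widetilde{f}(x_k,\overline{\mu})-\delta\alpha_k\|\nabla_x\widetilde{f}(x_k,\overline{\mu})\|^2$, so $\{\widetilde{f}(x_k,\overline{\mu})\}$ is nonincreasing and $\{x_k\}\subseteq L(x_0,\overline{\mu})$. (iii) $L(x_0,\overline{\mu})$ is bounded: one repeats the coercivity argument of Lemma~\ref{lem31} with $|x|$ replaced by $\psi(x,\overline{\mu})$, using $\psi(x,\overline{\mu})_i-|x_i|\to0$ as $|x_i|\to\infty$, to get $\widetilde{f}(x,\overline{\mu})\to\infty$ as $\|x\|\to\infty$. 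Hence $\nabla_x\widetilde{f}(\cdot,\overline{\mu})$ is continuous, and therefore bounded, on the compact set $L(x_0,\overline{\mu})$ together with its closed unit neighbourhood, say by $M$, so $\|d_k\|\le M$ for all $k$.

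Next I would extract summability and run the standard contradiction argument. Since $\widetilde{f}(\cdot,\overline{\mu})\ge0$, the nonincreasing sequence $\{\widetilde{f}(x_k,\overline{\mu})\}$ converges, and telescoping~\eqref{armijo} gives $\delta\sum_{k\ge0}\alpha_k\|\nabla_x\widetilde{f}(x_k,\overline{\mu})\|^2\le\widetilde{f}(x_0,\overline{\mu})<\infty$, hence $\alpha_k\|\nabla_x\widetilde{f}(x_k,\overline{\mu})\|^2\to0$. Suppose, for contradiction, $\|\nabla_x\widetilde{f}(x_k,\overline{\mu})\|\not\to0$, and pick a subsequence with $\|\nabla_x\widetilde{f}(x_{k_j},\overline{\mu})\|\ge\varepsilon>0$. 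Then $\alpha_{k_j}\to0$, so for all large $j$ the trial stepsize $\alpha_{k_j}/\rho>\alpha_{k_j}$ had been rejected in Step~3, i.e.\ $\widetilde{f}\big(x_{k_j}+(\alpha_{k_j}/\rho)d_{k_j},\overline{\mu}\big)-\widetilde{f}(x_{k_j},\overline{\mu})>\delta(\alpha_{k_j}/\rho)\nabla_x\widetilde{f}(x_{k_j},\overline{\mu})^Td_{k_j}$. By the mean value theorem the left-hand side equals $(\alpha_{k_j}/\rho)\nabla_x\widetilde{f}(z_j,\overline{\mu})^Td_{k_j}$ for some $z_j=x_{k_j}+\theta_j(\alpha_{k_j}/\rho)d_{k_j}$ with $\theta_j\in(0,1)$; dividing by $\alpha_{k_j}/\rho$ and rearranging gives $\big(\nabla_x\widetilde{f}(z_j,\overline{\mu})-\nabla_x\widetilde{f}(x_{k_j},\overline{\mu})\big)^Td_{k_j}>(1-\delta)\|\nabla_x\widetilde{f}(x_{k_j},\overline{\mu})\|^2\ge(1-\delta)\varepsilon^2$. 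But $\|z_j-x_{k_j}\|\le(\alpha_{k_j}/\rho)M\to0$, so by uniform continuity of $\nabla_x\widetilde{f}(\cdot,\overline{\mu})$ together with Cauchy--Schwarz and $\|d_{k_j}\|\le M$, the left-hand side tends to $0$, contradicting the strictly positive lower bound $(1-\delta)\varepsilon^2$ (recall $\delta\in(0,1)$). Therefore $\|\nabla_x\widetilde{f}(x_k,\overline{\mu})\|\to0$.

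The only step that needs more than routine bookkeeping is (iii): establishing boundedness of the level set $L(x_0,\overline{\mu})$, which is what lets me keep $\{d_k\}$ bounded and invoke uniform continuity of $\nabla_x\widetilde{f}(\cdot,\overline{\mu})$ along the short segments $[x_{k_j},z_j]$ that may stick slightly outside the level set. This reuses the mechanism of Lemma~\ref{lem31}, the small extra point being that the smoothing does not alter the asymptotic behaviour of the residual because $\sqrt{x_i^2+\overline{\mu}}-|x_i|\to0$. One should also keep in mind that "$\overline{\mu}$" here denotes the value at which Step~4 has stopped changing the parameter, so that over the relevant range of iterations $\widetilde{f}(\cdot,\cdot)$ is genuinely the single function $\widetilde{f}(\cdot,\overline{\mu})$; with that understood, everything after (iii) is the classical steepest-descent/Armijo convergence proof.
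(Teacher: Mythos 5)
Your proof is correct and rests on the same core mechanism as the paper's: argue by contradiction, note that a non-vanishing gradient forces the accepted Armijo stepsizes to zero, and then derive a contradiction from the fact that the previous (rejected) trial stepsize violated \eqref{armijo}. The execution differs in two ways that are worth recording. First, the paper's proof opens by setting $\lim_{k\rightarrow\infty}x_k=x^*$, i.e.\ it assumes the whole iterate sequence converges, which Algorithm 1 does not guarantee; you avoid this by extracting summability of $\alpha_k\|\nabla_x\widetilde{f}(x_k,\overline{\mu})\|^2$ from telescoping \eqref{armijo} and then working along an arbitrary subsequence on which the gradient norm stays bounded away from zero, closing the argument with the mean value theorem and uniform continuity of the gradient rather than with the paper's limiting directional-derivative inequality $-\nabla_x\widetilde{f}(x^*,\overline{\mu})^Tp^*\geq\delta\nabla_x\widetilde{f}(x^*,\overline{\mu})^Tp^*$. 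This makes your version strictly more careful. Second, both arguments need the gradient to be (uniformly) continuous and bounded where the iterates live: the paper simply asserts uniform continuity on $L(x_0,\mu)$ in the sentence preceding the lemma, whereas you derive boundedness of $L(x_0,\overline{\mu})$ by transplanting the coercivity argument of Lemma \ref{lem31} to the smoothed residual. Be aware that this transplant inherits the hypothesis of Lemma \ref{lem31} (no sample matrix $A(\omega_i)$ may coincide with $\mathrm{diag}(\mathrm{sign}(x))$ along the relevant directions), so your step (iii) holds under an assumption the paper leaves implicit rather than unconditionally; with that caveat acknowledged, as you do, the proof is complete and in fact repairs the gaps in the paper's own argument.
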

\proof We proof this lemma by contradiction. Set $\underset{k\rightarrow\infty}{\lim}x_k=x^*$. Suppose that $\nabla_{x}{\widetilde{f}(x^*,\overline{\mu})}\neq 0$. From the continuity of $\widetilde{f}(x_k,\overline{\mu})$ and $\nabla_{x}{\widetilde{f}(x_{k},\overline{\mu})}$, we have
\begin{equation*}
\underset{k\rightarrow\infty}{\lim}\widetilde{f}(x_k,\overline{\mu})=\widetilde{f}(x^*,\overline{\mu}),
\end{equation*}
\begin{equation*}
\underset{k\rightarrow\infty}{\lim}\widetilde{f}(x_k,\overline{\mu})-\underset{k\rightarrow\infty}{\lim}\widetilde{f}(x_{k+1},\overline{\mu})=0.
\end{equation*}

By \eqref{armijo}, let $\alpha_k=\rho^{j_k}$, where $j_k$ is the smallest non-negative integer satisfying the inequality \eqref{armijo}. Combining with $s_k=\rho^{j_k}d_k$, we have
\begin{equation*}
\underset{k\rightarrow\infty}{\lim}(-\delta\nabla_x\widetilde{f}(x_k,\overline{\mu})^{T}s_k)=0,
\end{equation*}
so,
\begin{equation*}
\underset{k\rightarrow\infty}{\lim}\nabla_x\widetilde{f}(x_k,\overline{\mu})^{T}s_k=0.
\end{equation*}
From $\underset{k\rightarrow\infty}{\lim}\nabla_x\widetilde{f}(x_k,\overline{\mu})\neq0$, and then $\underset{k\rightarrow\infty}{\lim}\|s_k\|=0$. Due to $j_k$ is the smallest non-negative integer satisfying the inequality \eqref{armijo}, set $\rho^{j_k-1}=\frac{\rho^{j_k}}{\rho}$, then
\begin{equation*}
\widetilde{f}(x_k+\rho^{j_k-1}d_k,\overline{\mu})-\widetilde{f}(x_k,\overline{\mu})>\delta\rho^{j_k-1}\nabla_x\widetilde{f}(x_k,\overline{\mu})^{T}d_k.
\end{equation*}
By $\rho^{j_k-1}d_k=\frac{\rho^{j_k}d_k}{\rho}=\frac{s_k}{\rho}$, we get
\begin{equation}\label{skp}
\widetilde{f}(x_k+\frac{s_k}{\rho},\overline{\mu})-\widetilde{f}(x_k,\overline{\mu})>\delta\nabla_x\widetilde{f}(x_k,\overline{\mu})^{T}\frac{s_k}{\rho}.
\end{equation}
Let $p_k=\frac{s_k}{\|s_k\|}$, then $\frac{s_k}{\rho}=\frac{\|s_k\|}{\rho}p_k$, from $\underset{k\rightarrow\infty}{\lim}\|s_k\|=0$, we know that
\begin{equation*}
\underset{k\rightarrow\infty}{\lim}{\alpha_k}'=\underset{k\rightarrow\infty}{\lim}\frac{\|s_k\|}{\rho}=0.
\end{equation*}
\eqref{skp} can be rewritten as
\begin{equation}\label{frac}
\frac{\widetilde{f}(x_k+{\alpha_k}'p_k,\overline{\mu})-\widetilde{f}(x_k,\overline{\mu})}{{\alpha_k}'}>\delta\nabla_x\widetilde{f}(x_k,\overline{\mu})^{T}p_k.
\end{equation}
Due to $\|p_k\|=1$, the sequence $\{\|p_k\|\}$ is bounded. So there exists a convergence subsequence, let $\underset{k\rightarrow\infty}{\lim}\|p_k\|=p^*$, and then we get $p^*=1$. Taking the limit of both sides of \eqref{frac}, we have
\begin{equation*}
-\nabla_x\widetilde{f}(x^*,\overline{\mu})^{T}p^*\geq\delta\nabla_x\widetilde{f}(x^*,\overline{\mu})^{T}p^*.
\end{equation*}
Since $\delta\in(0,1)$, we obtain
\begin{equation}\label{tid0}
\nabla_x\widetilde{f}(x^*,\overline{\mu})^{T}p^*\geq 0.
\end{equation}
In addition, $p_k=\frac{s_k}{\|s_k\|}=\frac{d_k}{\|d_k\|}$, thus
\begin{eqnarray}\label{bdengsh}
  -\nabla_x\widetilde{f}(x_k,\overline{\mu})^{T}p_k &=& -\nabla_x\widetilde{f}(x_k,\overline{\mu})^{T}(\frac{d_k}{\|d_k\|}) \nonumber\\
   &=& \|\nabla_x\widetilde{f}(x_k,\overline{\mu})\|cos\theta_k \nonumber\\
   &\geq& \|\nabla_x\widetilde{f}(x_k,\overline{\mu})\|sin\beta,
\end{eqnarray}
where $\theta_k$ is the angle between $d_k$ and $-\nabla_x\widetilde{f}(x_k,\overline{\mu})$, $\beta\in(0,\frac{\pi}{2})$. Taking the limit of both sides of the inequation \eqref{bdengsh}, we have
\begin{equation*}
-\nabla_x\widetilde{f}(x^*,\overline{\mu})^{T}p^*\geq\|\nabla_x\widetilde{f}(x^*,\overline{\mu})\|sin\beta>0,
\end{equation*}
which contradicts with \eqref{tid0}. So we have $\underset{k\rightarrow\infty}{\lim}\nabla_x\widetilde{f}(x_k,\overline{\mu})=0$. The proof is completed.

\begin{theorem}\label{Thm41}
Let $\{\mu_k\}$ and $\{x_k\}$ be the sequence generated by Algorithm 1. Then
\begin{equation*}\label{ft>}
\underset{k\rightarrow\infty}{\lim}\|\nabla_{x}{\widetilde{f}(x_{k+1},\mu_k)}\|=0.
\end{equation*}
\end{theorem}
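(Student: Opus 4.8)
The plan is to combine Lemma~\ref{lemma4.1}, which settles the case of a \emph{frozen} smoothing parameter, with an analysis of the updating rule in Step~4 that governs how $\mu_k$ evolves. First I would record the elementary facts that $\{\mu_k\}$ is monotonically non-increasing (each iteration either leaves $\mu_k$ unchanged or multiplies it by $\sigma\in(0,1)$), is bounded below by $0$, and takes values only in the discrete set $\{\sigma^{j}\mu_0:j\ge 0\}$; hence $\mu_k$ converges, say $\mu_k\downarrow\mu^{*}\ge 0$, and it is \emph{eventually constant} as soon as $\mu^{*}>0$. The proof then splits according to whether $\mu^{*}>0$ or $\mu^{*}=0$.

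In the case $\mu^{*}>0$ there is an index $K$ with $\mu_k=\mu^{*}$ for all $k\ge K$, so the tail $\{x_k\}_{k\ge K}$ is precisely a run of the gradient/Armijo iteration \eqref{armijo} with the fixed parameter $\overline{\mu}=\mu^{*}$. Lemma~\ref{lemma4.1} then gives $\lim_{k\to\infty}\|\nabla_x\widetilde{f}(x_k,\mu^{*})\|=0$, and since $\mu_k=\mu_{k+1}=\mu^{*}$ for $k\ge K$, replacing $x_k$ by $x_{k+1}$ and $\mu^{*}$ by $\mu_k$ yields $\|\nabla_x\widetilde{f}(x_{k+1},\mu_k)\|\to 0$, which is the assertion.

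In the case $\mu^{*}=0$, since $\mu_k$ is decreased only by being multiplied by $\sigma$, the second branch of Step~4 is taken for infinitely many indices $k$, and for each such $k$ the test there reads $\|\nabla_x\widetilde{f}(x_k,\mu_k)\|<\overline{\gamma}\mu_k$; as $\mu_k\to 0$ this forces $\|\nabla_x\widetilde{f}(x_k,\mu_k)\|\to 0$ along that subsequence. I would then upgrade this to $\|\nabla_x\widetilde{f}(x_k,\mu_k)\|\to 0$ over \emph{all} $k$ by a telescoping argument: summing the Armijo decrease \eqref{armijo} over each block on which $\mu$ is held fixed, using $\widetilde{f}(\cdot,\mu)\ge 0$ and the boundedness of $\{x_k\}$ on the relevant level set (an analogue for the smoothed objective of Lemma~\ref{lem31}), to prevent $\|\nabla_x\widetilde{f}(x_k,\mu_k)\|$ from staying away from $0$ on the blocks between successive reductions of $\mu$. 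Finally, from $x_{k+1}-x_k=-\alpha_k\nabla_x\widetilde{f}(x_k,\mu_k)$ with $\alpha_k\in(0,1]$ one gets $\|x_{k+1}-x_k\|\to 0$, and the uniform continuity of $\nabla_x\widetilde{f}(\cdot,\mu)$ on the level set (stated just before Lemma~\ref{lemma4.1}) then gives $\|\nabla_x\widetilde{f}(x_{k+1},\mu_k)\|\to 0$.

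The step I expect to be the main obstacle is the upgrade from the subsequence to the full sequence when $\mu_k\to 0$: the Step~4 test only certifies smallness of the smoothed gradient along the indices where $\mu$ is actually reduced, whereas on the (possibly long) blocks in between the algorithm keeps $\mu$ fixed \emph{precisely because} the smoothed gradient is not small there, so bridging those blocks genuinely requires a level-set boundedness property of $\widetilde{f}(\cdot,\mu)$ together with the telescoped Armijo decrease; once $\|\nabla_x\widetilde{f}(x_k,\mu_k)\|\to 0$ is established, the passage to $x_{k+1}$ via uniform continuity is routine.
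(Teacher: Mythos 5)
Your dichotomy is the same one the paper uses, just phrased through the limit $\mu^{*}$ of the monotone sequence $\{\mu_k\}$ rather than through the index set $K=\{k\,|\,\mu_{k+1}=\sigma\mu_k\}$. In the frozen case you apply Lemma~\ref{lemma4.1} directly, while the paper uses it to derive a contradiction with the Step-4 test and conclude that this case cannot occur; both are legitimate. In the case $\mu_k\to 0$, the paper does exactly the first half of what you describe and then stops: it notes that for the indices $k_i\in K$ the Step-4 test gives $\|\nabla_x\widetilde{f}(x_{k_i+1},\mu_{k_i})\|\le\bar{\gamma}\mu_{k_i}\to 0$, i.e.\ the claimed quantity tends to $0$ along the subsequence of reduction steps, and it takes this as the conclusion of the theorem.

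The ``upgrade'' to the full sequence that you flag as the main obstacle is therefore not performed in the paper at all, and your sketch of it is the one part of your proposal that is not yet a proof. Summing the Armijo decrease \eqref{armijo} over a block on which $\mu$ is held fixed only bounds $\sum_k\alpha_k\|\nabla_x\widetilde{f}(x_k,\mu_k)\|^2$, which controls the gradient along the block only if the step sizes $\alpha_k$ are bounded away from zero; that requires a Lipschitz-gradient or level-set compactness argument that the paper does not provide (Lemma~\ref{lem31} concerns $F$, not the smoothed $\widetilde{f}(\cdot,\mu)$, and nothing guarantees $\{x_k\}$ converges). Moreover the objective value is not monotone across blocks, since $\widetilde{f}(x,\cdot)$ changes each time $\mu$ is reduced, so the telescoping does not chain globally. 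Your instinct about where the difficulty lies is exactly right; just be aware that the paper's own argument establishes only the subsequential statement, so to match it you need only the first half of your case $\mu^{*}=0$, whereas to genuinely prove the full limit you would have to supply the block-bridging argument in detail (or the theorem should be read as a $\liminf$-type result, which is what this style of smoothing-gradient analysis normally yields).
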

\proof
Define $K=\{k|\mu_{k+1}=\sigma \mu_k\}$. Suppose $K$ is a finite set, then there exists an integer $\widehat{k}$ such that for all $k>\widehat{k}$,
\begin{equation}\label{fgm}
\|\nabla_{x}{\widetilde{f}(x_{k},\mu_{k-1})}\|\geq \bar{\gamma} \mu_k,
\end{equation}
set $\mu_k=\mu_{\widehat{k}}=\overline{\mu}$, we get
\begin{center}
$\underset{x\in\mathbb{R}^n}{\min}\;\widetilde{f}(x,\overline{\mu})$.
\end{center}
From Lemma \ref{lemma4.1}, we have
\begin{equation*}
\underset{k\rightarrow\infty}{\lim}\|\nabla_{x}\widetilde{f}(x_k,\overline{\mu})\|=0,
\end{equation*}
which contradicts with \eqref{fgm}. So $K$ is an infinite set, i.e., $\underset{k\rightarrow\infty}{\lim}\mu_k=0$. Set $K=\{k_0, k_1, \cdots\}$, for $k_0<k_1<\cdots$, then
\begin{equation*}
\underset{k\rightarrow\infty}{\lim}\|\nabla_{x}\widetilde{f}(x_{k_i+1},\mu_{k_i})\|\leq\bar{\gamma}\underset{i\rightarrow\infty}{\lim}\mu_{k_i}=0.
\end{equation*}
The proof is completed.

In the following of this section, we verify the effectiveness of Algorithm 1 via the following given examples. The parameters used in Algorithm 1 are chosen as $\rho=0.5$, $\sigma=0.5$, $\delta=0.5$, $\mu_0=0.01$, $\bar{\gamma}=0.5$. We terminate our algorithm if $k\geq 10000$ or $\|\nabla_{x}\widetilde{f}(x_{k},\mu_{k})\|<1.0e-5$ satisfied. We implement all numerical test in MATLAB R2019b and run the codes on a PC with 1.80GHz CPU.
\begin{example}~Consider SAVE (\ref{e11}), where
$$A(\omega)=\left(
\begin{array}{rcl}
2+\omega  &  1\\
5  &  1+\omega
\end{array}\right),~~b(\omega)=\left(
\begin{array}{rcl}
4+\omega\\
5+3\omega
\end{array}\right).$$
\end{example}

We generate samples $\omega^i$, $i=1,2,\cdots,N$, which obey the uniform distribution of $[0,1]$. The numerical results of Example 4.1 are shown in Table 4.1 and Figure 4.1, where $N$ denotes the number of $\omega^i$, $x^0$, $x^*$ and $f(x^*)$ denote the initial point, the optimum solution and the optimum value, respectively.\\
\newpage
$$\mbox{Table 4.1 Numerical results of Example 4.1}$$
$$\begin{tabular}{cccc}
\hline \hline \noalign{\smallskip}
$N$  &  $x^0$  &  $x^*$  &  $f(x^*)$\\
\hline
10 & (0.9415,1.7138)$^T$ & (1.0000,3.0000)$^T$ &1.2332e-09\\
50 & (1.5088,0.6925)$^T$  & (1.0000,3.0000)$^T$ & 1.2342e-09 \\
 100 & (1.6206,1.1140)$^T$  & (1.0000,3.0000)$^T$& 1.2553e-09 \\
 200 & (1.6822,0.7090)$^T$ &(1.0000,3.0000)$^T$ & 1.2360e-09\\
 500 &(1.3098,1.7802)$^T$  &(1.0000,3.0000)$^T$ & 1.2104e-09\\
\noalign{\smallskip}\hline
\end{tabular}$$

\begin{figure}[h]
    \centering
    \includegraphics[width=8cm,height=6cm]{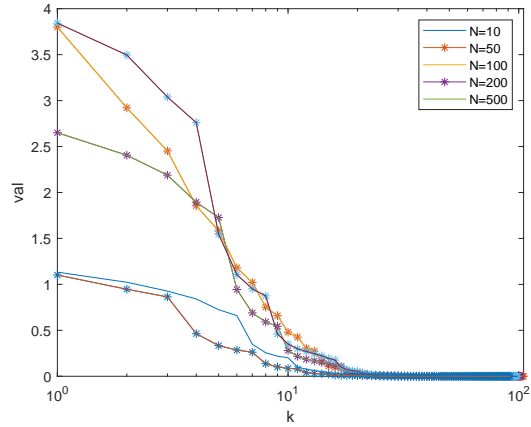}
    \captionsetup{font={scriptsize}}
    \caption{Numerical results of Example 4.1}
\end{figure}

\begin{example}~Consider SAVE (\ref{e11}), where
\begin{center}
$A(\omega)=\begin{pmatrix}2+\omega&1&0&0\\2&1+\omega&0&0\\0&0&2+\omega&1\\0&2&0&1+\omega\end{pmatrix}$, $b(\omega)=\begin{pmatrix}2+\omega\\2+\omega\\2+\omega\\2+\omega\end{pmatrix}$.
\end{center}
\end{example}

We generate samples $\omega^i$, $i=1,2,\cdots,N$, which obey the uniform distribution of $[0,1]$. The detailed numerical results are shown in Table 4.2 and Figure 4.2, where $N$ denotes the number of $\omega^i$, $x^0$, $x^*$ and $f(x^*)$ denote the initial point, the optimum solution and the optimum value, respectively.\\
\newpage
$$\mbox{Table 4.2 Numerical results of Example 4.2}$$
$$\begin{tabular}{cccc}
\hline \hline \noalign{\smallskip}
$N$  &  $x^0$  &  $x^*$  &  $f(x^*)$\\
\hline
10 & (1.3027,1.4874,0.6039,0.1792)$^T$ & (1.0000,1.0000,1.0000,1.0000)$^T$ &5.7084e-09\\
50 & (1.0894,1.9952,1.0220,1.7470)$^T$  & (1.0000,1.0000,1.0000,1.0000)$^T$ & 5.7252e-09 \\
 100 & (0.9878,1.7254,0.4858,1.6685)$^T$  & (1.0000,1.0000,1.0000,1.0000)$^T$& 5.8684e-09 \\
 200 & (0.2891,0.7410,1.2448,1.9951)$^T$ &(1.0000,1.0000,1.0000,1.0000)$^T$ & 5.7938e-09\\
 500 &(1.6171,1.9691,1.7718,0.4277)$^T$  &(1.0000,1.0000,1.0000,1.0000)$^T$ & 5.7870e-09\\
\noalign{\smallskip}\hline
\end{tabular}$$

\begin{figure}[h]
    \centering
    \includegraphics[width=8cm,height=6cm]{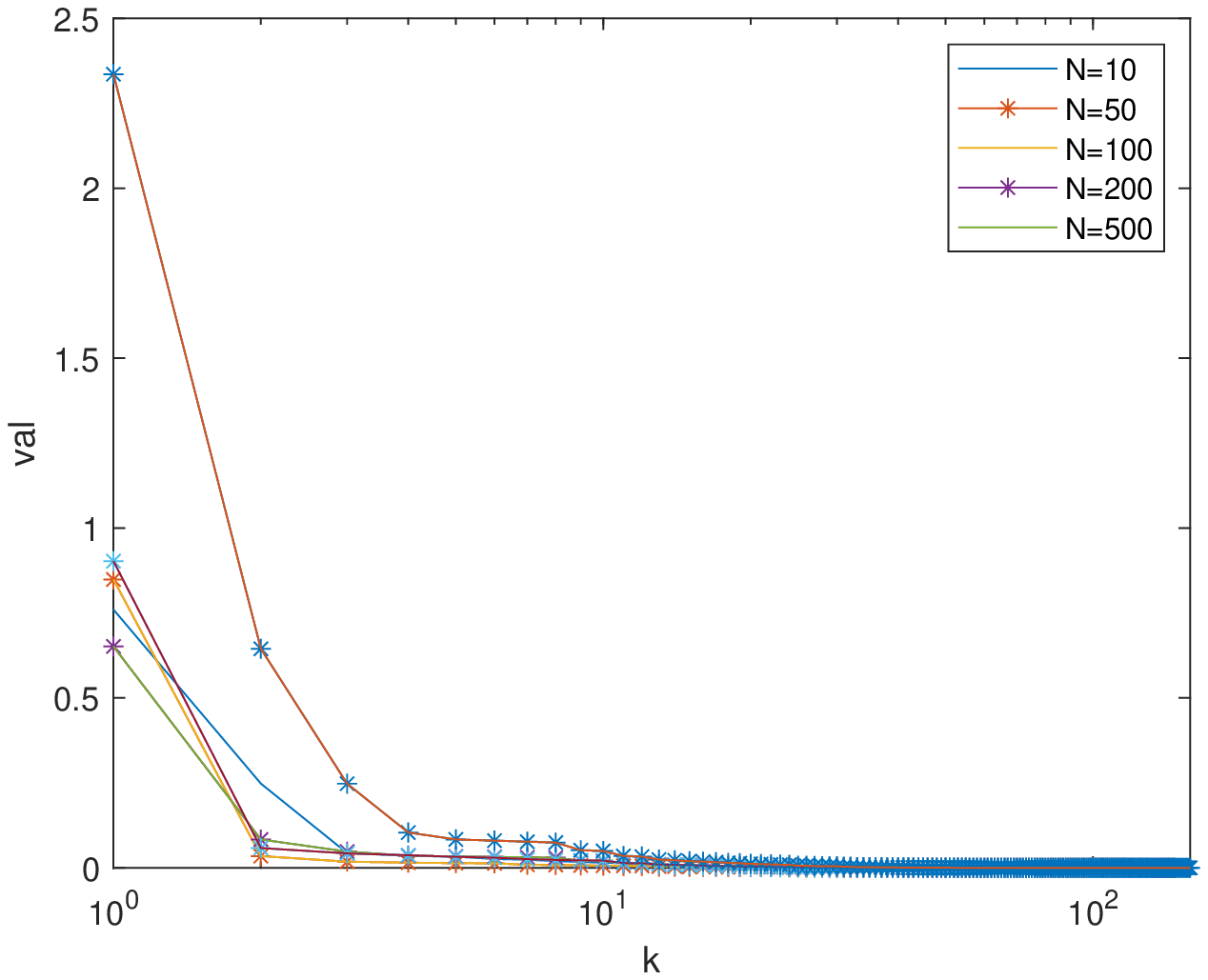}
    \captionsetup{font={scriptsize}}
    \caption{Numerical results of Example 4.2}
\end{figure}

\begin{example}~Consider SAVE (\ref{e11}), where
\begin{small}
\begin{equation*}
A(\omega)=\begin{pmatrix}5+\omega&0&0&0&0&2&1&0&0&3\\ \frac{1}{2}&2+\omega&0&\frac{1}{2}&1&0&1&0&6&0\\0&\frac{1}{4}&7+\omega&\frac{3}{4}&0&2&0&0&\frac{1}{2}&\frac{1}{2}\\1&1&2&2+\omega&\frac{1}{2}&0&\frac{3}{2}&2&0&1\\0&0& \frac{2}{5}&\frac{1}{4}&6+\omega&2&0&1&\frac{7}{20}&1\\2&\frac{1}{2}&4&0&0&1+\omega&\frac{1}{2}&2&1&0\\0&5&0&\frac{2}{3}&0&\frac{2}{3}&3+\omega&\frac{1}{4}&1&\frac{5}{12}\\
2&1&1&1&1&\frac{1}{2}&0&4+\omega&\frac{1}{2}&0\\ \frac{1}{7}&\frac{5}{7}&0&0&1&0&\frac{1}{7}&0&9+\omega&0\\3&0&2&1&\frac{5}{2}&0&\frac{1}{2}&\frac{1}{4}&\frac{1}{4}&1+\omega\end{pmatrix},
\end{equation*}
\end{small}
and
\begin{center}
$b(\omega)=(10+\omega,10+\omega,\cdots,10+\omega)^{T}\in R^{10}$.
\end{center}
\end{example}

We generate samples $\omega^i$, $i=1,2,\cdots,N$, which obey the uniform distribution of $[0,1]$. The initial points are randomly generated. The detailed numerical results are shown in Table 4.3 and Figure 4.3, where $N$ denotes the number of $\omega^i$, $x^*$ and $f(x^*)$ denote the optimum solution and the optimum value respectively.
$$\mbox{Table 4.3 Numerical results of Example 4.3}$$
$$\begin{tabular}{cccc}
\hline \hline \noalign{\smallskip}
$N$   &  $x^*$  &  $f(x^*)$\\
\hline
10  & (1.0858,1.0705,$\cdots$,1.0122)$^T$ &0.0063\\
50  & (1.0882,1.0776,$\cdots$,1.0097)$^T$ & 0.0072 \\
 100  & (1.0893,1.0727,$\cdots$,0.9988)$^T$& 0.0086 \\
 200  &(1.0890,1.0725,$\cdots$,0.9998)$^T$ & 0.0084\\
 500   &(1.0892,1.0737,$\cdots$,1.0001)$^T$ & 0.0084\\
\noalign{\smallskip}\hline
\end{tabular}$$

\begin{figure}[h]
    \centering
    \includegraphics[width=8cm,height=6cm]{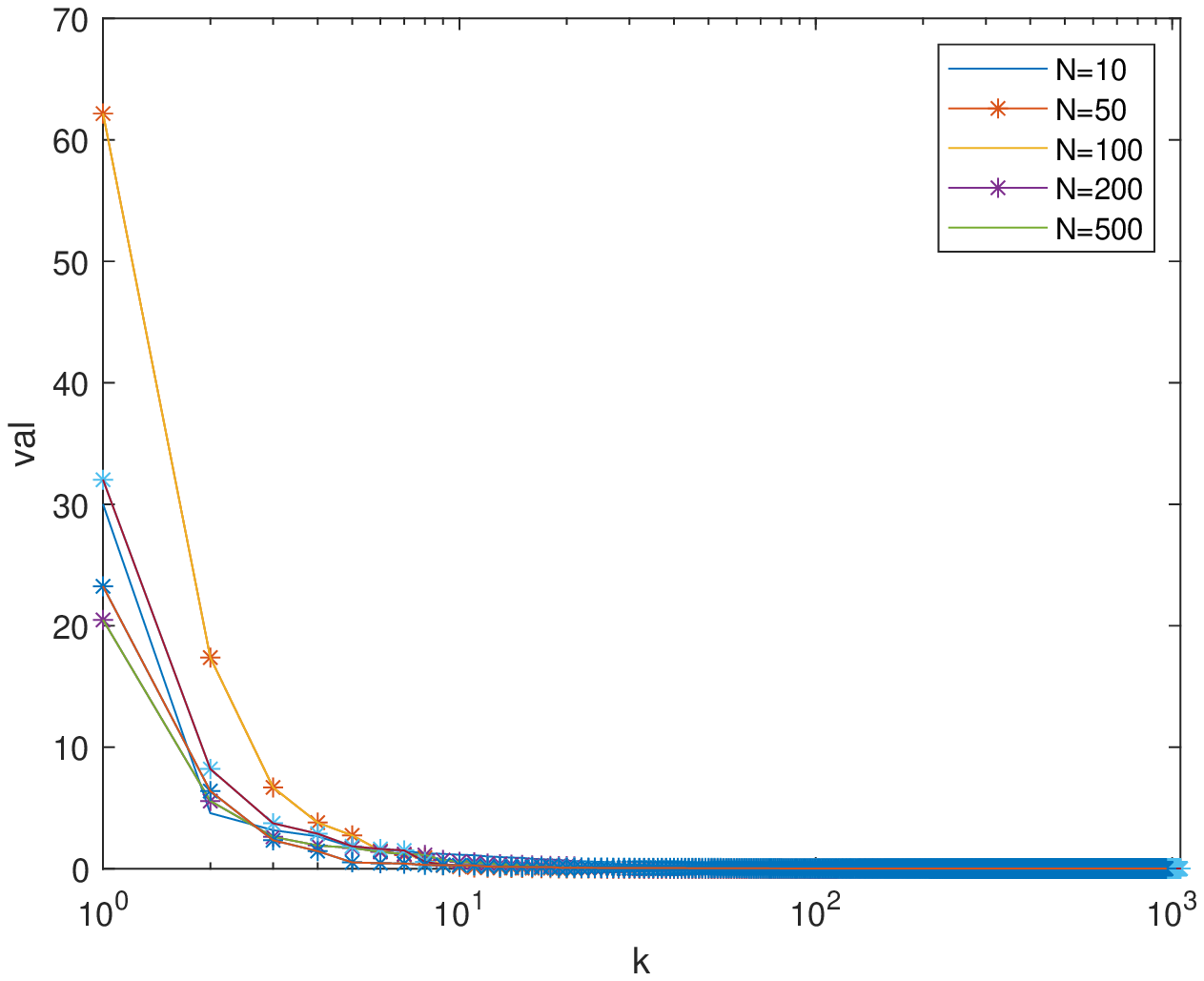}
    \captionsetup{font={scriptsize}}
    \caption{Numerical results of Example 4.3}
\end{figure}

\begin{example}~Consider SAVE (\ref{e11}), where
\begin{center}
$A(\omega)={\begin{pmatrix}2+\omega&1&&&&\\1&2+\omega&1&&&\\&1&2+\omega&1&&\\&&\ddots&\ddots&\ddots&\\&&&1&2+\omega&1\\&&&&1&2+\omega\end{pmatrix}}_{n\times n}$,
\end{center}
and
\begin{center}
$b(\omega)=(2+\omega,3+\omega,3+\omega,\cdots,3+\omega,2+\omega)^{T}\in R^n$.
\end{center}
\end{example}

We generate samples $\omega^i$, $i=1,2,\cdots,N$, which obey the uniform distribution of $[0,1]$. The initial points are randomly generated. The detailed numerical results are shown in Table 4.4, Table 4.5, Figure 4.4 and Figure 4.5, where $n$ denotes the dimension, $N$ denotes the number of $\omega^i$, $x^*$ and $f(x^*)$ denote the optimum solution and the optimum value respectively.
$$\mbox{Table 4.4 Numerical results of Example 4.4 (n=100)}$$
$$\begin{tabular}{cccc}
\hline \hline \noalign{\smallskip}
 $N$   &  $x^*$  &  $f(x^*)$\\
\hline
10  & (1.0000,1.0000,$\cdots$,1.0000)$_{100\times 1}^T$ & 1.4082e-07 \\
50  & (1.0000,1.0000,$\cdots$,1.0000)$_{100\times 1}^T$ & 1.4026e-07 \\
 100  & (1.0000,1.0000,$\cdots$,1.0000)$_{100\times 1}^T$ & 1.3966e-07 \\
 200  & (1.0000,1.0000,$\cdots$,1.0000)$_{100\times 1}^T$ & 1.4032e-07\\
 500   & (1.0000,1.0000,$\cdots$,1.0000)$_{100\times 1}^T$ & 1.4027e-07\\
\noalign{\smallskip}\hline
\end{tabular}$$

\begin{figure}[h]
    \centering
    \includegraphics[width=8cm,height=6cm]{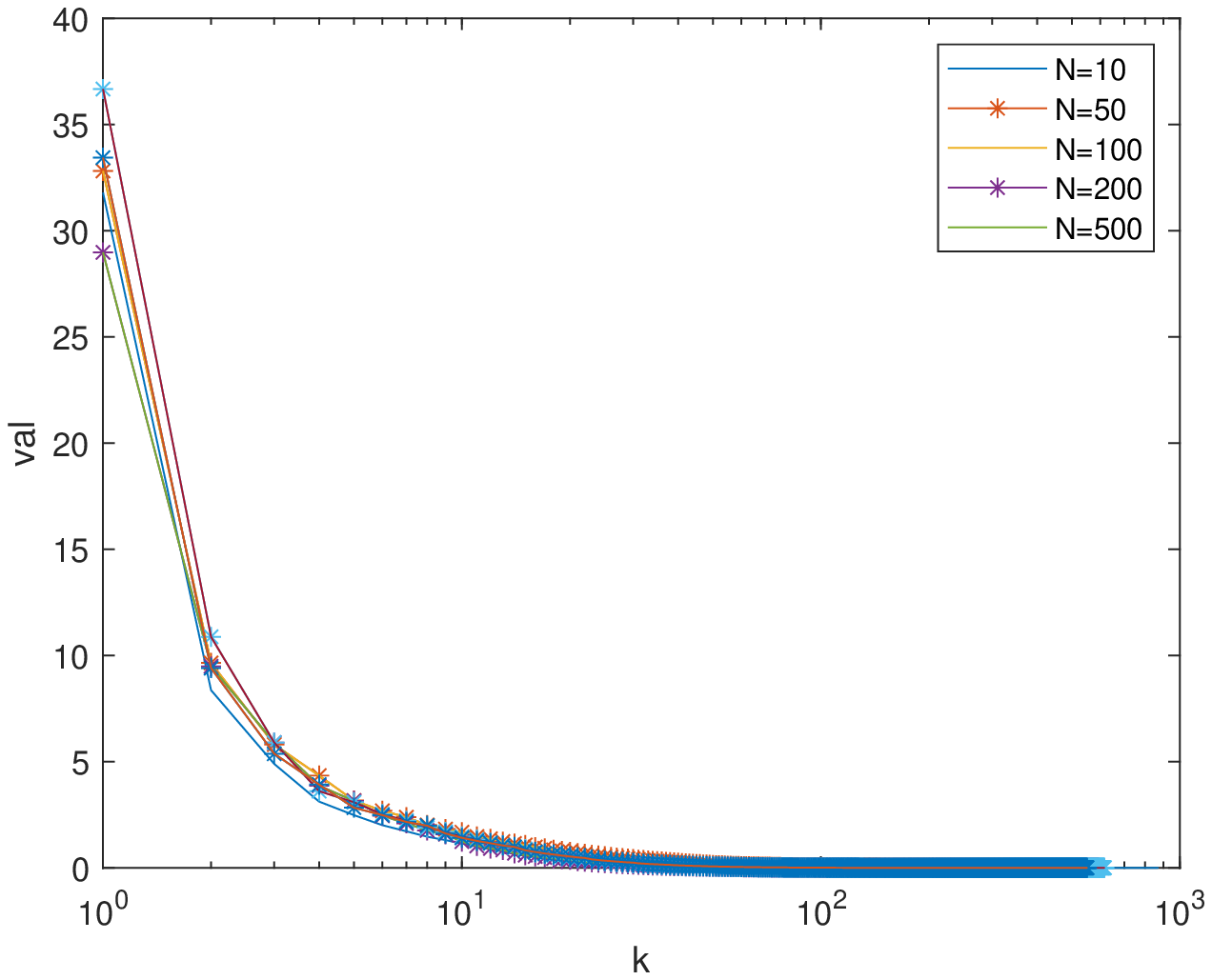}
    \captionsetup{font={scriptsize}}
    \caption{Numerical results of Example 4.4 (n=100)}
\end{figure}

\newpage
$$\mbox{Table 4.5 Numerical results of Example 4.4 (n=500)}$$
$$\begin{tabular}{cccc}
\hline \hline \noalign{\smallskip}
 $N$   &  $x^*$  &  $f(z^*)$\\
\hline
10  & (1.0000,1.0000,$\cdots$,1.0000)$_{500\times 1}^T$ & 7.0087e-07 \\
50  & (1.0000,1.0000,$\cdots$,1.0000)$_{500\times 1}^T$ & 6.9867e-07 \\
 100  & (1.0000,1.0000,$\cdots$,1.0000)$_{500\times 1}^T$ & 6.9962e-07 \\
 200  & (1.0000,1.0000,$\cdots$,1.0000)$_{500\times 1}^T$ & 6.9930e-07\\
 500   & (1.0000,1.0000,$\cdots$,1.0000)$_{500\times 1}^T$ & 6.9897e-07\\
\noalign{\smallskip}\hline
\end{tabular}$$

\begin{figure}[h]
    \centering
    \includegraphics[width=8cm,height=6cm]{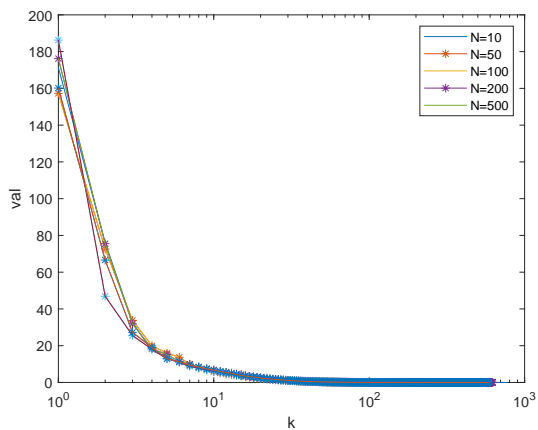}
    \captionsetup{font={scriptsize}}
    \caption{Numerical results of Example 4.4 (n=500)}
\end{figure}

From the above numerical results, we can see that SAVE (\ref{e11}) can be solved by simple unconstrained optimization method. And the ERM formulation can avoid transforming SAVE (\ref{e11}) into a complicated constrained optimization problem.

\section{Conclusions} In this paper, we propose a new kind of absolute value equation problem with random quantities, which is called stochastic absolute value equations. The properties of the proposed stochastic absolute value equations are studied. The expected value formulation and expected residual minimization formulation for solving the proposed stochastic absolute value equations are also given. Absolute value equations is widely used in studying engineering problems, economics and management problems. It is very meaningful to study this kind of stochastic absolute value equations, which is a more extensive problem than the absolute value equations.\\

{\bf Acknowledgements.} This work was supported by National Natural Science Foundation of China (No.11671220, 12171271). The authors wish to give their sincere thanks to the editor and the anonymous referees for their valuable and helpful comments, which help improve the quality of this paper significantly.\\


\begin{thebibliography}{99}

\bibitem{Cot12} Cottle, R., Pang, J., Stone., R.: The linear complementarity problem. Academic Press, San Diego, (1992)

\bibitem{mang07} Mangasarian, O.: Absolute value programming. Computational Optimization and Applications, 36: 43-53 (2007)

\bibitem{mang06} Mangasarian, O., Mayer, R.: Absolute value equation. Linear Algebra and Application, 419: 359-367 (2006)

\bibitem{mang0702} Mangasarian, O.: Absolute value equation solution via concave minimization. Optimization Letters, 1: 3-8 (2007)

\bibitem{cacc11} Caccetta, L., Qu, B., Zhou, G.: A globally and quadratically convergent method for absolute value equations. Computational Optimization and Applications, 48: 45-58 (2011)

\bibitem{gur94} Gurkan, G., Ozge, A., Robison,S.: Sample-path solution of stochastic variational inequalities. Mathematical Programming, 84: 313-333 (1994)

\bibitem{rav17} Ravat, U., Shanbhag, U.: On the existence of solutions to stochastic quasi-variational inequality and complementarity problems. Mathematical Programming, 165: 291-330 (2017)

\bibitem{chen05} Chen, X., Fukushima, M.: Expected residual minimization method for stochastic linear complementarity problems. Mathematics of Operations Reserch, 30: 1022-1038 (2005)

\bibitem{chen09} Chen, X., Zhang, C., Fukushima,M.: Robust solution of monotone stochastic linear complementarity problem. Mathematical Programming, 117: 51-80 (2009)

\bibitem{fang07} Fang, H., Chen, X., Fukushima, M.: Stochastic $R_0$ matrix linear complementarity problems. SIAM Journal on Optimization, 18: 482-506 (2007)

\bibitem{zhang09} Zhang, C., Chen, X.: Smoothing projected gradient method and its application to stochastic linear complementarity problems. SIAM Journal on Optimization, 20: 627-649 (2009)

\bibitem{zhang11} Zhang, C.: Existence of optimal solution for general stochastic linear complementarity problems. Operations Research Letters, 39: 78-82 (2011)

\bibitem{lin06} Lin, G., Fukushima, M.: New reformulations for stochastic nonlinear complementarity problems. Optimization Methods and Software, 21: 551-564 (2006)

\bibitem{linc08} Ling, C., Qi, L., Zhou, G., Caccetta, L.: The $SC^1$ property of an expected residual function arising from stochastic complementarity problems. Operations Research Letters, 36: 456-460 (2008)

\bibitem{du20} Du, S., Che, M., Wei, Y.: Stochastic structured tensors to stochastic complementarity problems. Computational Optimization and Applications, 75: 649-668 (2020)

\bibitem{ming20} Ming, Z., Zhang, L., Qi, L.: Expected residual minimization method for monotone stochastic tensor
complementarity problem.  Computational Optimization and Applications, 77: 871-896 (2020)

\bibitem{du22} Du, S., Cui, L., Chen, Y., Wei, Y.: Stochastic tensor complementarity problem with discrete distribution. Journal of Optimization Theory and Applications, 192: 912-929 (2022)

\bibitem{nie92} Niederreiter, H.: Random number generation and quasi-monte carlo methods. Journal of the American Statistical Association, 88: 147-153 (1992)


\bibitem{BLO05} Burke, J., Lewis, A., Overton, M.: A robust gradient sampling algorithm for nonsmooth, nonconvex optimization. SIAM Journal on Optimization, 15: 751-779 (2005)

\bibitem{K07} Kiwiel, K.: Convergence of the gradient sampling algorithm for nonsmooth nonconvex optimization. SIAM Journal on Optimization, 18: 379-388 (2007)

\bibitem{C12} Chen, X.: Smoothing methods for nonsmooth, nonconvex minimization. Mathematical Programming, 134: 71-99 (2012)

\end{thebibliography}
\end{document}